\newtheorem{theorem}{Theorem}[section]
\newtheorem{lemma}{Lemma}[section]
\newtheorem{example}{Example}[section]
\newcommand{\fim}{\hfill\rule{2mm}{2mm}}
\begin{document}
	\title{
		\vspace{0.5in}  {\bf Existence of positive solution for a singular elliptic problem with an asymptotically linear nonlinearity }}
	
	\author{ {\bf\large Ricardo Lima Alves}\footnote{The author was supported by CNPq/Brazil Proc. $N^o$ $141110/2017-1$. }
		\hspace{2mm}\\
		{\it\small  Universidade de Bras\'ilia, Departamento de Matem\'atica}\\
		{\it\small   70910-900, Bras\'ilia - DF - Brazil}\\
		{\it\small e-mails: ricardoalveslima8@gmail.com }\vspace{1mm}\\
	}
	
	\date{}
	\maketitle \vspace{-0.2cm}

	\begin{abstract}
		In this paper we consider
		the existence of positive solutions for a singular elliptic problem involving an asymtotically linear nonlinearity and depending on one positive parameter. Using variational methods, together with comparison techniques, we show the existence, uniqueness, non-existence and regularity of the solutions. We also obtain a bifurcation-type result.
	\end{abstract}

	\noindent
	{\it \footnotesize 2010 Mathematics Subject Classification}. {\scriptsize 35B09, 35B32, 35B33, 35B38, 35B65}.\\
	{\it \footnotesize Key words}. {\scriptsize Strong singularity, nondifferentiable functional, global minima,  regularity, bifurcation from infinity.}
	
	%
	%
	%
	\section{\bf Introduction}
	\def\theequation{1.\arabic{equation}}\makeatother
	\setcounter{equation}{0}

In this paper we deal  with the following semilinear elliptic problem involving a singular term: 
\begin{equation}\label{pq}\tag{$P_{\lambda}$}
	\left\{
	\begin{aligned}
		-\Delta u =a(x) u^{-\gamma} + \lambda f(u)~in ~ \Omega,\\
		u> 0~in~ \Omega,~~  u(x)=0~~on~~\partial \Omega,
	\end{aligned}
	\right.
\end{equation}
where $0<\gamma, \Omega \subset \mathbb{R}^{N}~ (N\geq 3)$ is a bounded domain with smooth boundary $\partial \Omega$ and $a(x)$ is a positive function that belongs to $L^{1}(\Omega)$. The continuous function $f:\mathbb{R}^{+} \to \mathbb{R}^{+} (\mathbb{R}^{+}= [0,\infty[) $ satisfies
\begin{itemize}
	\item[$(f)_{1}$] $\displaystyle \lim_{s\to \infty}\frac{f(s)}{s}=\theta $ for some $\theta\in (0,\infty)$,
	\item[$(f)_{2}$] $s\to \frac{f(s)}{s}$ is non-increasing in $(0,\infty)$.
\end{itemize}

We say that $u\in H_{0}^{1}(\Omega)$ is a solution of $(P)_{\lambda}$ if $u>0$ almost everywhere (a.e.) in $\Omega$, and, for every $\phi \in H_{0}^{1}(\Omega)$,
$$a(x)u^{-\gamma}\phi \in L^{1}(\Omega)$$
and 
$$\displaystyle \int_{\Omega} \nabla u\nabla \phi=\displaystyle \int_{\Omega} a(x) u^{-\gamma} \phi+\lambda \int_{\Omega}  f(u) \phi. $$

The study of singular elliptic problems started with
the pioneering work of Fulks-Maybee \cite{FM} and received a considerable attention after the paper of Crandall-Rabinowitz-Tartar \cite{CRT} (see e.g. \cite{RM, CD,H,HSS,LM,PRR,PW,CAS, SL1,SWL} and the references therein). Note that due to the presence of the singular term some difficulties appear to solve the problem $(P)_{\lambda}$. For example, problem $(P)_{\lambda}$ does not have a variational structure to apply classical results of critical point theory which are useful in the study of nonlinear boundary value problems (see e.g. \cite{ABC, BN, B}).

The problem $(P)_{\lambda}$ was studied by Anello-Faraci \cite{AF} when $a(x)\equiv 1$ and $0<\gamma<1$. By combining truncation techniques with variational methods, together with comparison techniques they proved the existence, non-existence and uniqueness of solution to $(P)_{\lambda}$ and obtained a bifurcation-type result. In this work, we complete the study done by \cite{AF} considering the case $\gamma\geq 1$ and proving new results even when $0<\gamma<1$. We would like to point out that the approach used in \cite{AF}  can not be applied when $\gamma\geq 1$.

Here we intend to use variational methods as well, but in a different way from previous works. Indeed, we give a direct method to obtain solutions of $(P)_{\lambda}$. In our approach we do not use truncation as in previous works, see for example \cite{AF, PRR, PW}. We do not invoke sets constraint to use the variational principle of Ekeland as in Sun \cite{Y} and Sun-Zhang \cite{Y1}. In fact, the technique used in \cite{Y,Y1} is more efficient when the nonlinearity $f(t)$ is homogeneous and sublinear, such as $f(t)=t^{p},~0<p<1$ and  it may not be applied to a more general nonlinearity such as the nonlinearities considered in our work.

Our study was motivated by the papers \cite{AF,Y,Y1}. Before stating our main results we will give some definitions and a general summary of our approach to prove the existence of solutions of the problem $ (P)_{\lambda} $.
First, we can associate to the problem $(P)_{\lambda}$ the following energy functional
$$I_{\lambda}(u)=\frac{1}{2}\Vert u\Vert^{2}-\lambda \displaystyle \int_{\Omega}F(u)-\displaystyle \int_{\Omega}a(x) G(\vert u\vert),$$
for every $u\in D$ (for the definition of $F$ and $G$ see section $2$), where 
$$D=\left\{u\in H^{1}_{0}(\Omega): \displaystyle \int_{\Omega}a(x)G(\vert u\vert)\in \mathbb{R} \right\}$$
is the effective domain of $I_{\lambda}$. It should be noted that for $\gamma\geq 1$, $D$ is not closed as usual (certainly not weakly closed). Indeed, if $u\in D$ then $n^{-1}u\in D$ for all $n\in \mathbb{N}$ and $n^{-1}u\to 0$ in $H_{0}^{1}(\Omega)$. Since $0\notin D$ we have that the set $D$ is not closed.

Notice that when $a(x)\in L^{\infty}(\Omega)$ and $0<\gamma<1$ we have that $D= H^{1}_{0}(\Omega)$. On the other hand, when $\gamma\geq 1$ and $a(x)\in L^{1}(\Omega)$ may occur that $D=\emptyset $. In fact, assume that $a(x)>c$ a.e. in $\Omega$ for some constant $c>0$ and $\gamma \geqslant 3$. Then from Theorem $2$ of \cite{Y1} it follows that
$$\infty=\displaystyle c\int_{\Omega}\vert u \vert^{1-\gamma}\leqslant \int_{\Omega}a(x)\vert u \vert^{1-\gamma}=(1-\gamma) \displaystyle \int_{\Omega}a(x)G(\vert u\vert)$$
for every $u\in H^{1}_{0}(\Omega)$, and  therefore we have $D=\emptyset $. Thus, to study problem $(P)_{\lambda}$ in the case of strong singularity $\gamma\geq 1$ we must assume that $D\neq\emptyset$, that is, there exists a function $u_{0}\in H_{0}^{1}(\Omega)$ such that $a(x)G(\vert u_{0}\vert)\in L^{1}(\Omega)$. 

The main difficulty to prove existence of solutions comes from case $\gamma\geq 1$ and the fact that the nonlinearity $f(t)$ is not homogeneous. In fact, in this case the effective domain $D$ is not closed and the functional $I_{\lambda}$ is not continuous. Thus the arguments used in the papers mentioned above can not be applied. Our aim is to prove that even with these difficulties that there is a global minimum $u_{\lambda}>0$ a.e. in $\Omega$ of $I_{\lambda}$. After this, using the strong singularity we have that $u_{\lambda}+\epsilon\phi \in D$ for every $\epsilon>0$, where $0\leq \phi \in H^{1}_{0}(\Omega)$ and therefore $I_{\lambda}(u_{\lambda})\leq I_{\lambda}(u_{\lambda}+\epsilon\phi)$ holds, and this inequality will help us to prove that $u_{\lambda}$ is solution of $(P)_{\lambda}$ to $\lambda$ suitable (see Theorem \ref{T1}).




We denote by $\phi_{1}$ the $L^{\infty}(\Omega)$-normalized (that is, $\Vert \phi_{1}\Vert_{\infty}=1$) positive eigenfunction for the smallest eigenvalue $\delta_{1}>0$ of  $\left(-\Delta, H_{0}^{1}(\Omega)\right)$, and set
\begin{equation}\label{3}
	\lambda_{\ast}=\frac{\delta_{1}}{\theta}.
\end{equation}

Our main results are as follows.
\begin{theorem}\label{T1}
	Assume that $ \gamma \geq 1$ and $D\neq \emptyset$. Let $\lambda_{\ast}$ be as in \eqref{3} and $a(x)\in L^{\infty}(\Omega)$ if $\gamma=1$. If $(f)_{1}-(f)_{2}$ hold, then for each $\lambda \in [0,\lambda_{\ast})$ there exists a unique solution $u_{\lambda}$ of $(P)_{\lambda}$. Moreover, $u_{\lambda}$ is a global minimum of $I_{\lambda}$, that is
	\begin{equation}\label{129}
		I_{\lambda}(u_{\lambda})=\displaystyle \inf_{u\in D}I_{\lambda}(u)
	\end{equation}
	and the following properties are satisfied:
	\begin{itemize}
		
		\item[a)] $\Vert u_{\lambda} \Vert \to \infty$ as $\lambda \to \lambda_{\ast}$,
		\item[b)] the function $(0,\lambda_{\ast})\ni \lambda \to I_{\lambda}(u_{\lambda})$  belongs to $C^{1}((0,\lambda_{\ast}),\mathbb{R})$ and 
		$$
			\frac{dI_{\lambda}(u_{\lambda})}{d\lambda}=-\displaystyle \int_{\Omega}F(u_{\lambda}),
		$$
		\item[c)] if there exist constants $C>0$ and $\eta\in (0,1)$ such that $a(x)\leq Cd^{\gamma-\eta}(x,\partial \Omega)$ a.e. in $\Omega$, then  $u_{\lambda}$  belongs to $C^{1,\alpha}(\overline{\Omega})$ for some $\alpha\in (0,1)$. The function $d(x,\partial \Omega)$ denotes the distance from a point $x\in \overline{\Omega}$ to the boundary $\partial \Omega$, where $\overline{\Omega}=\Omega \cup \partial \Omega$ is the closure of $\Omega \subset \mathbb{R}^{N}$,
		\item[d)]  assume that $f(s)=s$ for all $s\in \mathbb{R}^{+}$. If $\gamma>1$ and $\phi_{1}\in D$, then $I_{\lambda}(u_{\lambda})\to 0$ as $\lambda \to \lambda_{\ast}$.
	\end{itemize}
	
\end{theorem}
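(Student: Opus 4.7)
The plan is to construct $u_\lambda$ as a global minimum of $I_\lambda$ on $D$, derive the Euler-Lagrange equation from one-sided perturbations together with a scaling identity, and then read off items (a)-(d) from the variational characterisation and comparison arguments.

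\emph{Existence of the minimum.} For $\lambda\in[0,\lambda_*)$, conditions $(f)_1$ and $(f)_2$ furnish $F(s)\le\tfrac12(\theta+\varepsilon)s^2+C_\varepsilon$ for any $\varepsilon>0$, so the Poincaré inequality yields
$$I_\lambda(u)\ge \tfrac12\Bigl(1-\tfrac{\lambda(\theta+\varepsilon)}{\delta_1}\Bigr)\|u\|^2 - C - \int_\Omega a(x) G(|u|).$$
The singular integral is bounded above because $G$ is bounded above on $(0,\infty)$ when $\gamma>1$, while the borderline $\gamma=1$ is absorbed via $a\in L^\infty(\Omega)$ and the elementary bound $\log t\le\varepsilon t^2+C_\varepsilon$. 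Hence $I_\lambda$ is coercive on $D$. Weak lower semicontinuity of $\|\cdot\|^2$ is classical, $\int F(u)$ is weakly continuous by compact Sobolev embedding, and the concave singular part is handled by applying Fatou to the nonnegative quantity $M-G(|u_n|)$ where $M=\sup G$. A minimising sequence then converges weakly to some $u_\lambda\in H^1_0(\Omega)$; the energy bound forces $\int a G(|u_\lambda|)\in\mathbb{R}$, so $u_\lambda\in D$, and $u_\lambda>0$ a.e.\ because a zero set of positive measure would produce $G(|u_\lambda|)=-\infty$ against the positive weight.

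\emph{Euler-Lagrange equation and uniqueness.} The main obstacle is that $D$ is not linear and $I_\lambda$ need not be Gâteaux differentiable at $u_\lambda$. Only one-sided perturbations are immediately admissible: the strong singularity (monotonicity of $G$) forces $u_\lambda+\varepsilon\varphi\in D$ for every $0\le\varphi\in H^1_0(\Omega)$ and $\varepsilon>0$, so dividing $I_\lambda(u_\lambda)\le I_\lambda(u_\lambda+\varepsilon\varphi)$ by $\varepsilon$ and sending $\varepsilon\downarrow 0$, with monotone convergence on the singular difference quotient, yields
$$\int_\Omega \nabla u_\lambda\nabla\varphi \ge \int_\Omega a(x) u_\lambda^{-\gamma}\varphi+\lambda\int_\Omega f(u_\lambda)\varphi,\qquad \forall\,\varphi\ge 0,$$
so in particular $a u_\lambda^{-\gamma}\varphi\in L^1(\Omega)$. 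Since the homothety $(1+t)u_\lambda\in D$ for every $t\in(-1,\infty)$, differentiating $t\mapsto I_\lambda((1+t)u_\lambda)$ at $t=0$ produces the scaling identity $\|u_\lambda\|^2=\lambda\int f(u_\lambda)u_\lambda+\int a u_\lambda^{1-\gamma}$. On test functions supported in $\{u_\lambda\ge\delta\}$ both signs of $t$ in $u_\lambda+t\varphi$ keep $D$, so the two-sided derivative vanishes and the full equation holds there; approximating an arbitrary $\varphi\in H^1_0(\Omega)$ by such functions---using the $L^1$-control on $a u_\lambda^{-\gamma}\varphi$ just obtained---upgrades the inequality to the Euler-Lagrange equation. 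Uniqueness follows from $(f)_2$: the map $s\mapsto(a(x)s^{-\gamma}+\lambda f(s))/s$ is strictly decreasing (its singular part already is), and a Picone/Brezis-Oswald argument then rules out a second positive solution.

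\emph{Items (a)-(d).} For (a), suppose $\|u_{\lambda_n}\|$ stays bounded along some $\lambda_n\to\lambda_*$; a weak limit $u_*\ge 0$ exists, and Fatou on the singular term combined with weak convergence in the other terms yields
$$\delta_1\!\int u_*\phi_1 \ge \int a(x) u_*^{-\gamma}\phi_1+\lambda_*\!\int f(u_*)\phi_1,$$
so $f(s)\ge\theta s$ (from $(f)_1$, $(f)_2$) together with $\lambda_*\theta=\delta_1$ gives $\int a u_*^{-\gamma}\phi_1\le 0$, contradicting $a>0$. Item (b) is the envelope theorem: uniqueness and the compactness of Step~1 yield continuity $\lambda\mapsto u_\lambda$, and the sandwich
$$I_\mu(u_\mu)-I_\lambda(u_\lambda)\le I_\mu(u_\lambda)-I_\lambda(u_\lambda)=-(\mu-\lambda)\!\int F(u_\lambda),$$
together with its symmetric lower bound, produces $C^1$-regularity and the stated derivative. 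For (c), Hopf's lemma on a regularised problem gives $u_\lambda\ge c\,d(x,\partial\Omega)$, so the hypothesis on $a$ yields $a u_\lambda^{-\gamma}\le C d^{-\eta}\in L^p(\Omega)$ for some $p>N$ (because $\eta<1$); $W^{2,p}$ elliptic regularity and Morrey's embedding then deliver $u_\lambda\in C^{1,\alpha}(\overline{\Omega})$. Finally for (d), with $f(s)=s$ the scaling identity gives $I_\lambda(u_\lambda)=\frac{\gamma+1}{2(\gamma-1)}\int a u_\lambda^{1-\gamma}\ge 0$; since $\phi_1\in D$, the admissible test $I_\lambda(u_\lambda)\le I_\lambda(t\phi_1)=A(\lambda)t^2+Bt^{1-\gamma}$ with $A(\lambda)=\tfrac12(\delta_1-\lambda)\int\phi_1^2\to 0$ and $B=\tfrac{1}{\gamma-1}\int a\phi_1^{1-\gamma}$ fixed, minimised over $t>0$, forces the upper bound to $0$, whence $I_\lambda(u_\lambda)\to 0$.
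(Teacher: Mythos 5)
Your overall strategy (coercivity plus Fatou for the existence of the minimum, one-sided perturbations together with the scaling identity $\psi'(1)=0$ for the Euler--Lagrange equation, a comparison principle for uniqueness, the envelope sandwich for b), and the minimisation of $t\mapsto I_\lambda(t\phi_1)$ for d)) is the same as the paper's. Your item a) is in fact a clean shortcut: the paper passes to the limit in the full equation (using comparison with $u_0$ and dominated convergence) to produce a solution at $\lambda_*$ and contradict the nonexistence lemma, whereas you obtain the contradiction $0\ge\int_\Omega a\,u_*^{-\gamma}\phi_1>0$ directly from the Fatou inequality tested against $\phi_1$ and the identity $\lambda_*\theta=\delta_1$; that works.

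There is, however, a genuine error in your proof of c). From $u_\lambda\ge\epsilon d(x)$ and $a\le Cd^{\gamma-\eta}$ you correctly get $a\,u_\lambda^{-\gamma}\le Cd^{-\eta}$, but the claim that $d^{-\eta}\in L^p(\Omega)$ for some $p>N$ ``because $\eta<1$'' is false: for a smooth bounded domain $\int_\Omega d^{-s}<\infty$ if and only if $s<1$, so $d^{-\eta}\in L^p(\Omega)$ only for $p<1/\eta$, and as soon as $\eta\ge 1/N$ no admissible $p$ exceeds $N$. Then $W^{2,p}$ regularity plus Sobolev embedding gives at best $C^{0,\alpha}$, not $C^{1,\alpha}$. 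The paper circumvents this by writing $u_\lambda=z+w$, where $w$ solves $-\Delta w=a\,u_\lambda^{-\gamma}$ and is shown to lie in $C^{1,\alpha}(\overline{\Omega})$ by Lemma 2.1 of Hai \cite{HA} --- a boundary-behaviour result tailored precisely to right-hand sides dominated by $d^{-\eta}$ with $\eta\in(0,1)$ --- while $z=u_\lambda-w$ solves a problem with an at most linearly growing right-hand side and is handled by bootstrap; you need that lemma (or an equivalent barrier argument), since $L^p$-theory alone does not reach $C^{1,\alpha}$ here. A smaller soft spot: your upgrade of the one-sided inequality to the full equation via test functions ``supported in $\{u_\lambda\ge\delta\}$'' is underspecified, because that set is only measurable and the approximation of a general $\varphi\in H^1_0(\Omega)$ by such functions (with convergence of the gradient terms) is exactly the difficulty that the paper's Graham--Eagle device --- testing the one-sided inequality with $(u_\lambda+\epsilon\phi)^+$ and invoking $\psi'(1)=0$ --- is designed to avoid; you should use that argument instead.
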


\begin{theorem}\label{T2}
	Assume that $0<\gamma<1$ and $a(x)\in L^{\infty}(\Omega)$. Let $\lambda_{\ast}$ be as in \eqref{3}. Then, for each $\lambda \in [0,\lambda_{\ast})$ there exists a unique  solution $u_{\lambda}$ of $(P)_{\lambda}$. Moreover, $u_{\lambda}$ is a global minimum of $I_{\lambda}$, that is
	\begin{equation}\label{1}
		I_{\lambda}(u_{\lambda})=\displaystyle \inf_{u\in D}I_{\lambda}(u)
	\end{equation}
	and the following properties are satisfied:
	\begin{itemize}
		\item[a)] $I_{\lambda}(u_{\lambda})\to -\infty$ as $\lambda \to \lambda_{\ast}$,
		\item[b)] $\Vert u_{\lambda} \Vert \to \infty$ as $\lambda \to \lambda_{\ast}$,
		\item[c)]if $f(t)t$ is increasing, then the function $(0,\lambda_{\ast})\ni \lambda \to \Vert u_{\lambda}\Vert$ is increasing,
		\item[d)] the function $(0,\lambda_{\ast})\ni \lambda \to I_{\lambda}(u_{\lambda})$ belongs to $C^{1}((0,\lambda_{\ast}),\mathbb{R})$ and 
		$$
			\frac{dI_{\lambda}(u_{\lambda})}{d\lambda}=-\displaystyle \int_{\Omega}F(u_{\lambda}),
		$$
		\item[e)] the function $u_{\lambda}$  belongs to $C^{1,\alpha}(\overline{\Omega})$ for some $\alpha\in (0,1)$.
	\end{itemize}
	
\end{theorem}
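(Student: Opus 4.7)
The effective domain in this case coincides with all of $H_0^1(\Omega)$ (since $a\in L^\infty$ and $1-\gamma\in(0,1)$ make $\int a|u|^{1-\gamma}$ finite on every $u\in H_0^1$), and $I_\lambda$ is continuous on $H_0^1$. The plan is to produce $u_\lambda$ as a global minimizer of $I_\lambda$, upgrade it to a solution of $(P)_\lambda$, prove uniqueness, and then read off the asymptotic items (a)--(e).

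\emph{Existence via the direct method.} From $(f)_1$--$(f)_2$ one gets $f(s)\le\theta s$ and hence $F(s)\le(\theta/2+\varepsilon)s^2+C_\varepsilon$; combining this with Poincaré and the sublinear bound $\int aG(|u|)\le C\|u\|^{1-\gamma}$ I expect
\[
I_\lambda(u)\ge \tfrac{1}{2}\|u\|^2\bigl(1-\lambda(\theta+2\varepsilon)/\delta_1\bigr)-C\|u\|^{1-\gamma}-C,
\]
which is coercive whenever $\lambda<\lambda_\ast$. Weak lower semicontinuity is immediate from the compact embedding $H_0^1\hookrightarrow L^q$, $q<2^\ast$, applied to $u\mapsto\int F(u)$ and $u\mapsto\int aG(|u|)$. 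A minimizer $u_\lambda\in H_0^1$ therefore exists; replacing it by $|u_\lambda|$ allows me to assume $u_\lambda\ge 0$ (the $\|\cdot\|^2$ and $G$-term are invariant and the $F$-term can only decrease after extending $F$ by zero on $(-\infty,0)$).

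\emph{From minimizer to solution, and uniqueness.} For $0\le\phi\in H_0^1$ and $t>0$ the inequality $I_\lambda(u_\lambda+t\phi)\ge I_\lambda(u_\lambda)$, together with monotone convergence on the $G$-term (the difference quotient $(G(u_\lambda+t\phi)-G(u_\lambda))/t$ increases to $u_\lambda^{-\gamma}\phi$ as $t\downarrow 0$), should yield
\[
\int\nabla u_\lambda\nabla\phi\ge \int a u_\lambda^{-\gamma}\phi+\lambda\int f(u_\lambda)\phi,\qquad \phi\ge 0.
\]
Integrability of $a u_\lambda^{-\gamma}\phi$ for every $\phi\ge 0$ will rule out $\{u_\lambda=0\}$ having positive measure, and once $u_\lambda>0$ a.e.\ is secured one can perturb in both signs to promote the inequality to the Euler--Lagrange equation, so $u_\lambda$ solves $(P)_\lambda$. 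Uniqueness I plan to handle by a Brezis--Oswald argument: testing the equations for two solutions by $(u^2-v^2)^+/u$ and $(v^2-u^2)^+/v$ and combining the strict monotonicity of $s\mapsto s^{-\gamma}$ with the non-increasing character of $s\mapsto f(s)/s$ will force $u=v$.

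\emph{Asymptotic items.} For (a), inserting $t\phi_1$ and using $f(s)\ge\theta s$ gives
\[
I_\lambda(t\phi_1)\le \tfrac{t^2}{2}(\delta_1-\lambda\theta)\int\phi_1^2-\tfrac{t^{1-\gamma}}{1-\gamma}\int a\phi_1^{1-\gamma},
\]
whose minimum in $t>0$ is of order $-(\lambda_\ast-\lambda)^{-(1-\gamma)/(1+\gamma)}\to-\infty$. Item (b) then follows since a bounded $\{u_\lambda\}$ would keep $\{I_\lambda(u_\lambda)\}$ bounded, contradicting (a). For (c), $\lambda_1<\lambda_2$ makes $u_{\lambda_1}$ a strict subsolution of $(P)_{\lambda_2}$, so comparison and uniqueness give $u_{\lambda_1}<u_{\lambda_2}$ a.e.; testing each equation with its own solution and invoking monotonicity of $sf(s)$ and of $s\mapsto s^{1-\gamma}$ then yields $\|u_{\lambda_1}\|<\|u_{\lambda_2}\|$. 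Part (d) is a standard envelope computation: the double inequality $-(\lambda-\mu)\int F(u_\mu)\le I_\lambda(u_\lambda)-I_\mu(u_\mu)\le -(\lambda-\mu)\int F(u_\lambda)$, combined with continuity $\mu\mapsto u_\mu$ (from uniqueness and compactness of minimizing sequences), will give the $C^1$ regularity and the derivative formula. Finally (e) reduces to a Hopf-type bound $u_\lambda\ge c\,d(x,\partial\Omega)$, which makes $a u_\lambda^{-\gamma}\in L^p$ for every $p<N/\gamma$, so $u_\lambda\in W^{2,p}\cap C^{1,\alpha}(\overline{\Omega})$ by classical elliptic regularity. The main obstacle throughout will be the passage from minimizer to solution: the singular term is not a priori integrable against arbitrary test functions, and the monotone-convergence trick on one-sided perturbations is what simultaneously yields $a u_\lambda^{-\gamma}\phi\in L^1$, the a.e.\ positivity of $u_\lambda$, and the Euler--Lagrange equation.
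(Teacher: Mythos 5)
Your overall architecture matches the paper's: direct minimization of $I_\lambda$ on $D^{+}$ after the reduction $I_\lambda(|u|)\le I_\lambda(u)$, one-sided perturbations $u_\lambda+t\phi$ with monotone convergence/Fatou to obtain the variational inequality together with $a u_\lambda^{-\gamma}\phi\in L^1$ and $u_\lambda>0$ a.e., a comparison principle for uniqueness, the test family $t\phi_1$ for item $a)$, the identity $\Vert u_\lambda\Vert^2=\int a u_\lambda^{1-\gamma}+\lambda\int f(u_\lambda)u_\lambda$ for item $c)$, and the envelope inequality plus continuity of $\lambda\mapsto u_\lambda$ for item $d)$. Two steps, however, contain genuine gaps.

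First, the passage from the one-sided inequality to the Euler--Lagrange equation. ``Perturb in both signs'' does not work as stated: for $\phi\ge 0$ and $t>0$ the competitor $u_\lambda-t\phi$ changes sign, and on the set $\{u_\lambda<t\phi\}$ the quotient $t^{-1}\bigl(G(u_\lambda)-G(|u_\lambda-t\phi|)\bigr)$ is neither monotone nor dominated (both arguments of $G$ lie in $[0,t\phi]$ there, so the quotient is only controlled by $Ct^{-\gamma}\phi^{1-\gamma}$), so neither monotone convergence nor Fatou applies in the direction you need. The paper closes this gap with the Graham--Eagle device: it first records the scaling identity $\psi'(1)=0$ for $\psi(t)=I_\lambda(tu_\lambda)$, i.e.\ $\Vert u_\lambda\Vert^2=\lambda\int f(u_\lambda)u_\lambda+\int a u_\lambda^{1-\gamma}$, and then inserts $\Psi=(u_\lambda+\epsilon\phi)^{+}$, with $\phi$ of arbitrary sign, into the one-sided inequality; subtracting the identity, using $|\{u_\lambda+\epsilon\phi<0\}|\to 0$, dividing by $\epsilon$ and letting $\epsilon\to 0^{+}$ produces the reverse inequality. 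Some such mechanism is indispensable and is absent from your plan.

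Second, item $e)$. The exponent is wrong: $d(x)^{-\gamma p}$ is integrable near $\partial\Omega$ only for $p<1/\gamma$ (not $p<N/\gamma$), so the bound $a u_\lambda^{-\gamma}\le C d^{-\gamma}$ gives $a u_\lambda^{-\gamma}\in L^{p}$ only for $p<1/\gamma$, while $W^{2,p}\hookrightarrow C^{1,\alpha}(\overline\Omega)$ requires $p>N$; your route therefore covers only $\gamma<1/N$. The paper instead writes $u_\lambda=w+z$, where $w\in C^{1,\alpha}(\overline\Omega)$ solves $-\Delta w=a u_\lambda^{-\gamma}$ by Hai's lemma (Lemma 2.1 of \cite{HA}, designed precisely for right-hand sides bounded by $Cd^{-\beta}$ with $\beta<1$), and $z=u_\lambda-w$ solves a nonsingular subcritical problem to which bootstrap applies. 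Minor points: the consequence of $(f)_1$--$(f)_2$ is $f(s)\ge\theta s$, not $f(s)\le\theta s$ (your coercivity estimate nevertheless uses the correct upper bound $f(s)\le(\theta+\epsilon)s+c$, so this is a slip); and your Brezis--Oswald uniqueness argument is a legitimate alternative to the paper's regularized comparison lemma (Lemma \ref{l3}), though you would still need to verify that $(u^2-v^2)^{+}/u$ is an admissible test function for the singular equation.
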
 

Some examples of functions satisfying the assumptions $D\neq \emptyset$ and $(f)_{1}-(f)_{2}$ are as follows.
\begin{example}\begin{itemize} \item[$(a)_{1}$] The function $d(x):=d(x,\partial \Omega)$ denotes the distance from a point $x\in \overline{\Omega}$ to the boundary $\partial \Omega$, where $\overline{\Omega}=\Omega \cup \partial \Omega$ is the closure of $\Omega \subset \mathbb{R}^{N}$. Note that the strong maximum principle and boundary point principles of V\'azquez \cite{V} guarantee $\phi_{1}>0$ in $\Omega$ and $\frac{\partial \phi_{1}}{\partial \nu}
		<0$ on $\partial \Omega$, respectively. Hence, since $\phi_{1}\in C^{1}(\overline{\Omega})$ there are constants $c$ and $C$, $0<c<C$, such that $cd(x)\leq \phi_{1}(x)\leq Cd(x)$ for all $x\in \Omega$. Let $a(x):=d^{\eta}(x)$ and $\gamma>1$, where $1+\eta-\gamma>0$. Thus, there is a constant $c>0$ such that
		$$a(x)\vert \phi_{1}(x)\vert^{1-\gamma}	\leq cd^{1+\eta-\gamma}(x),$$	
		and since $\phi_{1}\in H_{0}^{1}(\Omega)$ and $cd^{1+\eta-\gamma}(x)\in L^{1}(\Omega)$, we have that $D\neq \emptyset$.	
		\item[$(a)_{2}$] Let $a(x)\equiv 1$ and $\gamma=1$. If we choose  $\eta\in (0,1)$, we obtain that $\displaystyle \lim_{t\to 0^{+}}t^{\eta}\ln t=0$. Therefore, as $\phi_{1}\in L^{\infty}(\Omega)$ there is a constant $c>0$ such that $\vert \phi_{1}^{\eta}(x)\ln \phi_{1}(x)\vert \leq c$ for all $x\in \Omega$, and from \cite{LM} we have that 
		$$\vert \ln \phi_{1}(x)\vert= \vert \phi_{1}^{\eta}(x)\ln \phi_{1}(x)\vert \phi_{1}^{-\eta}(x)\leq c \phi_{1}^{-\eta}(x) \in L^{1}(\Omega),$$
		and this implies that $D\neq \emptyset$.	
		\item[$(e)_{1}$] The function $f(s)=s,~s\geq 0$ satisfies $(f)_{1}-(f)_{2}$. In this case, we have that $\lambda_{\ast}=\lambda_{1}$.
		
		\item[$(e)_{2}$] The function $f(s)=as+s^{r}+1$, $s\geq 0$ $(r\in (0,1))$, where $a>0$ satisfies $(f)_{1}-(f)_{2}$. In this case, we have that $\lambda_{\ast}=\lambda_{1}/a$.
	\end{itemize}
\end{example}

The Theorem \ref{T1} extends the main result of Anello-Faraci \cite{AF} (see Theorem 2.2 in  \cite{AF}) in the sense that we consider strong singularity $\gamma\geq 1$. As far as we know, the properties $a)-d)$ of Theorem \ref{T1} and $a)-e)$ of Theorem \ref{T2} are new for singular nonlinearity. We prove in particular that $\lambda_{\ast}$ is a "bifurcation point from infinity" of $(P)_{\lambda}$. See the graphs below.
\begin{figure}[h]
	\centering
	\begin{tikzpicture}[scale=.65]
	\draw[thick, ->] (-1, 1) -- (6, 1);
	\draw[thick, ->] (0, 0) -- (0, 7);
	\draw[thick] (0, 2) .. controls (0, 2) and (4, 4) ..(3.85,7);
	\draw (0,7) node[left]{$\mathbb{R}$};
	\draw (6,-0.2) node[above]{$\mathbb{R}$};
	\draw (2,-0.2) node[above]{$\lambda$};
	\draw (0, 1) node[below left]{$0$};
	\draw[thick, dotted] (2, 1) -- (2,3.5);
	\draw (4,-0.2) node[above]{$\lambda_{\ast}$};
	\draw (1.5,3.4) node[above]{$\Vert u_{\lambda}\Vert $};
	\draw[thick, dotted] (4, 1) -- (4,7);
	\draw (3,0) node[below]{\textrm{Fig.1. Theorems \ref{T1} and \ref{T2} }};
	\end{tikzpicture}
	\hspace{0.8cm}
	\begin{tikzpicture}[scale=.65]
	\draw[thick, ->] (-1, 1) -- (6, 1);
	\draw[thick, ->] (0, 0) -- (0, 7);
	\draw[thick] (0, 5) .. controls (0, 5) and (3, 3) ..(4,1);
	\draw (0,7) node[left]{$\mathbb{R}$};
	\draw (6,-0.2) node[above]{$\mathbb{R}$};
	\draw (2,-0.2) node[above]{$\lambda$};
	\draw (0, 1) node[below left]{$0$};
	\draw[thick, dotted] (2, 1) -- (2,3.4);
	\draw (4,-0.2) node[above]{$\lambda_{\ast}$};
	\draw (2.55,3.4) node[above]{$I_{\lambda}(u_{\lambda}) $};
	\draw[thick, dotted] (4, 1) -- (4,7);
	\draw (3,0) node[below]{\textrm{Fig.2. Theorem \ref{T1} }};
	\end{tikzpicture}
	\hspace{0.8cm}
	\begin{tikzpicture}[scale=.65]
	\draw[thick, ->] (-1, 3) -- (6, 3);
	\draw[thick, ->] (0, 0) -- (0, 7);
	\draw[thick] (0, 5) .. controls (0, 5) and (3, 3) ..(3.85,0);
	\draw (0,7) node[left]{$\mathbb{R}$};
	\draw (6,2.2) node[above]{$\mathbb{R}$};
	\draw (1,3) node[below]{$\lambda$};
	\draw (0, 3) node[below left]{$0$};
	\draw[thick, dotted] (1, 3) -- (1,4.24);
	\draw (4.5,3) node[below]{$\lambda_{\ast}$};
	\draw (2,4) node[above]{$I_{\lambda}(u_{\lambda}) $};
	\draw[thick, dotted] (4, 0) -- (4,7);
	\draw (3,0) node[below]{\textrm{Fig.3. Theorem \ref{T2} }};
	\end{tikzpicture}
\end{figure}



The paper is organized as follows. Section 1 is devoted to some preliminaries and the existence of global minimum of $I_{\lambda}$ for $\lambda\in [0,\lambda_{\ast})$. In Section 2 we prove the Theorem \ref{T1}. In Section 3 we prove the Theorem \ref{T2}.

\textbf{Notation} Throughout this paper, we make use of the following notation:\\
\begin{itemize}
	\item $L^{p}(\Omega)$, for $1\leq p \leq \infty$, denotes the Lebesgue space with usual norm denoted by $\vert u\vert_{p}$.
	\item $H_{0}^{1}(\Omega)$ denotes the Sobolev space endowed with inner product
	$$\left(u,v\right)_{H}=\displaystyle \int_{\Omega} \nabla u \nabla v,~\forall u,v \in H_{0}^{1}(\Omega). $$
	The norm associated with this inner product will be denoted by $\Vert~~ \Vert$. 
	\item If $u$ is a measurable function, we denote by $u^{+}$ the positive part of $u$, which is given by $u^{+}=\left\{u,0\right\}$. 
	\item If $A$ is a measurable set in $\mathbb{R}^{N}$, we denote by $\vert A \vert$ the Lebesgue measure of $A$. 
	\item We denote by $\phi_{1}$ the $L^{\infty}(\Omega)$-normalized positive eigenfunction for the smallest eigenvalue $\delta_{1}>0$ of  $\left(-\Delta, H_{0}^{1}(\Omega)\right)$. 
	\item The function $d(x):=d(x,\partial \Omega)$ denotes the distance from a point $x\in \overline{\Omega}$ to the boundary $\partial \Omega$, where $\overline{\Omega}=\Omega \cup \partial \Omega$ is the closure of $\Omega \subset \mathbb{R}^{N}$.
	\item $c$ and $C$ denote (possibly different from line to line) positive constants.
\end{itemize}

\section{Existence of global minimum and Preliminaries}
This section deals with the existence of global minimum of $I_{\lambda}$ over the effective domain $D$, for each $\lambda \in [0,\lambda_{\ast})$. We also give some preliminary results. First, let us introduce the energy functional associated to the problem $(P)_{\lambda}$ and some of its properties. Define the function $G$ as it follows:\\
if $0<\gamma<1$, $G(t)= \frac{\vert t\vert^{1-\gamma}}{1-\gamma}$ and $t\in \mathbb{R}$,\\
if $\gamma=1$, \begin{equation*}
	G(t)=\left\{
	\begin{array}{l}
		\ln t,~if ~ t> 0\\
		+\infty,~if~ t=0
	\end{array}
	\right.
\end{equation*}
if $\gamma>1$,
\begin{equation*}
	G(t)=\left\{
	\begin{array}{l}
		\frac{t^{1-\gamma}}{1-\gamma},~if ~ t> 0\\
		+\infty,~if~ t=0.
	\end{array}
	\right.
\end{equation*}

Let 
\begin{equation*}
	f_{0}(t)=\left\{
	\begin{array}{l}
		f(t),~if ~ t\geq 0\\
		f(0),~if~ t<0.
	\end{array}
	\right.
\end{equation*}

From now on we will assume that:
\begin{itemize}
	\item  $0<a(x)\in L^{\infty}(\Omega)$, if $0<\gamma \leq 1$,
	\item  $0<a(x)\in L^{1}(\Omega)$, if $\gamma >1$.
\end{itemize}

So, we can associate to the problem $(P)_{\lambda}$ the following energy functional 
$$I_{\lambda}(u)=\frac{1}{2}\Vert u\Vert^{2}-\lambda \displaystyle \int_{\Omega}F(u)-\displaystyle \int_{\Omega}a(x)G(\vert u \vert )$$
for $u\in D$, where 
$$D=\left\{u\in H^{1}_{0}(\Omega): \displaystyle \int_{\Omega}a(x)G(\vert u \vert)\in \mathbb{R} \right\}$$ 
is the effective domain of $I_{\lambda}$ and 
$$F(t)=\displaystyle \int_{0}^{t}f_{0}(s)ds.$$

Now, by $(f)_{1}-(f)_{2}$ for every $\epsilon>0$ there exist constants $c=c(\epsilon)$ and $C=C(\epsilon)$  such that
\begin{equation}\label{15}
	\theta s \leq f(s)\leq \left(\theta +\epsilon\right)s+c,~\forall s\geq 0,
\end{equation}

\begin{equation}\label{8}
	\theta \frac{s^{2}}{2}\leq	F(s)\leq \frac{\left(\theta +\epsilon\right)}{2}s^{2}+Cs,~\forall s\geq 0.
\end{equation}

Assuming that $a(x)\in L^{\infty}(\Omega)$ and $0<\gamma<1$, we have that $D=H^{1}_{0}(\Omega)$ and the functional $I_{\lambda}$ is continuous, but it is not G\^ateaux differentiable in $D=H_{0}^{1}(\Omega)$. On the other hand, when $1\leq\gamma$ we have $D\subsetneqq H^{1}_{0}(\Omega)$. Since we are interested in positive solutions, we introduce the set 
$$D^{+} = \left\{u\in D: u\geq 0~\mbox{a.e. in}~\Omega\right\}.$$
and let us show that 
$$\displaystyle \inf_{v\in D}I_{\lambda}(v)=\displaystyle \inf_{v\in D^{+}}I_{\lambda}(v).$$

For each $u\in D$, we have
\begin{align*}
	I_{\lambda}(u)&=\frac{1}{2}\Vert \vert u\vert \Vert^{2}-\lambda \displaystyle \int_{u<0}F(u)-\lambda \displaystyle \int_{u\geq 0}F(u)-\displaystyle \int_{\Omega}a(x)G (\vert u\vert)\\
	&=\frac{1}{2}\Vert \vert u\vert \Vert^{2}-\lambda \displaystyle \int_{u<0}f(0)u-\lambda \displaystyle \int_{u\geq 0}F(u)-\displaystyle \int_{\Omega}a(x)G (\vert u\vert)\\
	&\geq \frac{1}{2}\Vert \vert u\vert \Vert^{2}-\lambda \displaystyle \int_{\Omega}F(\vert u\vert)-\displaystyle \int_{\Omega}a(x)G (\vert u\vert)= I_{\lambda}(\vert u\vert ),
\end{align*}
and this implies that $u\in D$ satisfies $I_{\lambda}(u)=\displaystyle \inf_{v\in D}I_{\lambda}(v)$ if and only if $\vert u\vert \in D^{+}$ satisfies $I_{\lambda}(\vert u \vert)=\displaystyle \inf_{v\in D^{+}}I_{\lambda}(v)$. Thus,
$$I_{\lambda}(u)=\displaystyle \inf_{v\in D}I_{\lambda}(v)=\displaystyle \inf_{v\in D^{+}}I_{\lambda}(v)= I_{\lambda}(\vert u \vert)$$
and to show \eqref{129} and \eqref{1} it is sufficient show that there exists $u\in D^{+}$ such that $I_{\lambda}( u)=\displaystyle \inf_{v\in D^{+}}I_{\lambda}(v)$.

After the considerations and definitions above we have the following lemma. It provides the existence of a global minimum of $I_{\lambda}$ for every $\lambda \in (0,\lambda_{\ast})$.

\begin{lemma}\label{l1}
	Assume that $0\leq \lambda<\lambda_{\ast}$. Then there exists $u_{\lambda}\in D^{+}$ such that 
	$$I_{\lambda}(u_{\lambda})=\displaystyle \inf_{u\in D^{+}}I_{\lambda}(u).$$
\end{lemma}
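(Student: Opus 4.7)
The plan is to apply the direct method of the calculus of variations: show that $I_\lambda$ is bounded below and coercive on $D^+$, extract a weakly convergent minimizing sequence, and pass to the limit in each term of the functional.

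First I would establish coercivity and the lower bound. Fix $\epsilon>0$ small enough that $\lambda(\theta+\epsilon)<\delta_1$, which is possible since $\lambda<\lambda_\ast=\delta_1/\theta$. The growth estimate \eqref{8} combined with the Poincar\'e-type inequality $\delta_1|u|_2^2\le\|u\|^2$ yields
$$\tfrac12\|u\|^2-\lambda\int_\Omega F(u)\ \ge\ \tfrac12\Bigl(1-\tfrac{\lambda(\theta+\epsilon)}{\delta_1}\Bigr)\|u\|^2-C\|u\|.$$
For the singular term I would split according to $\gamma$: if $\gamma>1$ then $G\le 0$ on $(0,\infty)$, so $-\int a G(|u|)\ge 0$; if $\gamma=1$ the elementary inequality $-\ln t\ge -t$ together with $a\in L^\infty(\Omega)$ gives $-\int a G(|u|)\ge -C\|u\|$; if $0<\gamma<1$, H\"older produces $-\int a G(|u|)\ge -C\|u\|^{1-\gamma}$. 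Putting these together, $I_\lambda$ is coercive and bounded below on $D^+$.

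Next I would take a minimizing sequence. The set $D^+$ is nonempty: for $\gamma<1$ this is immediate, while for $\gamma\ge 1$, given any $u_0\in D$ the function $|u_0|$ lies in $D^+$ because $G(|u|)$ depends only on $|u|$. Choose $(u_n)\subset D^+$ with $I_\lambda(u_n)\to\inf_{D^+}I_\lambda$. Coercivity bounds $(u_n)$ in $H_0^1(\Omega)$; up to a subsequence, $u_n\rightharpoonup u_\lambda$ weakly in $H_0^1$, $u_n\to u_\lambda$ strongly in $L^p(\Omega)$ for $1\le p<2^\ast$, and pointwise a.e., and $u_\lambda\ge 0$ a.e.\ is inherited from the $u_n$.

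The heart of the argument is lower semicontinuity of $I_\lambda$ along this sequence. Weak lsc of $\|\cdot\|^2$ is standard, and $\int F(u_n)\to\int F(u_\lambda)$ follows from a.e.\ convergence, the quadratic majorant in \eqref{8}, and strong $L^2$ convergence via Vitali. The singular term is the delicate one and splits again by $\gamma$. For $0<\gamma<1$ one has convergence $\int a G(|u_n|)\to\int a G(|u_\lambda|)$ through the elementary bound $\bigl|\,|s|^{1-\gamma}-|t|^{1-\gamma}\bigr|\le|s-t|^{1-\gamma}$ combined with strong $L^1$ convergence and $a\in L^\infty$. For $\gamma>1$ the integrand $-aG(|u_n|)$ is nonnegative, so Fatou gives $-\int a G(|u_\lambda|)\le\liminf\bigl(-\int a G(|u_n|)\bigr)$. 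For $\gamma=1$, Fatou is applied instead to the nonnegative quantity $-aG(|u_n|)+a|u_n|$, and the correction is cancelled using $u_n\to u_\lambda$ in $L^1$ and $a\in L^\infty$. In all cases $I_\lambda(u_\lambda)\le\liminf I_\lambda(u_n)=\inf_{D^+}I_\lambda\in\mathbb{R}$, which both forces $\int a G(|u_\lambda|)$ to be finite (so that $u_\lambda\in D^+$) and shows that $u_\lambda$ realizes the infimum.

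The main obstacle is precisely that $I_\lambda$ fails to be continuous on $D$ and $D$ fails to be weakly closed when $\gamma\ge 1$, so one cannot directly pass to the limit in the singular term. Fatou's lemma, applied after the appropriate sign-normalization to produce a nonnegative integrand, is the key device, and it is also what certifies a posteriori that the weak limit still belongs to the effective domain.
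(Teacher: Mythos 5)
Your proposal is correct and follows essentially the same route as the paper: the same coercivity estimate from \eqref{8} and Poincar\'e, the same case split on $\gamma$, and Fatou's lemma (in the $\gamma=1$ case your Fatou applied to the sign-normalized integrand $-a\ln|u_n|+a|u_n|\ge 0$ is just the explicit form of the paper's reverse-Fatou step using the majorant $\ln|u_n|\le |u_n|$). You are in fact a bit more explicit than the paper on the $0<\gamma<1$ lower semicontinuity and on the convergence of $\int_\Omega F(u_n)$, but the argument is the same in substance.
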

\begin{proof}
	Let $\epsilon>0$ such that
	$$\frac{\lambda\left(\theta+\epsilon\right)}{\delta_{1}}<1.$$
	
	Using \eqref{8}, Sobolev embedding  and Poincaré inequality we have
	\begin{equation}\label{2}
		I_{\lambda}(u)\geq \frac{1}{2}\left(1-\frac{\lambda\left(\theta+\epsilon\right)}{\delta_{1}}\right)\Vert u\Vert^{2}-c\Vert u\Vert-\displaystyle \int_{\Omega}a(x)G(\vert u \vert),
	\end{equation} 
	for every $u\in D^{+}$. Now, we have three cases to consider.\\
	\textbf{Case 1.} $0<\gamma <1$. In this case, by \eqref{2} the functional $I_{\lambda}$  is coercive and, since that $I_{\lambda}$ is sequentially weakly lower semicontinuous in $D=H^{1}_{0}(\Omega)$ a usual argument proves the statement of the lemma.\\ 
	\textbf{Case 2.}  $\gamma=1$. Since $a(x)\in L^{\infty}(\Omega)$ and $\ln \vert u\vert \leq \vert u\vert$, from \eqref{2} and Sobolev embedding we obtain
	$$
		I_{\lambda}(u)\geq \frac{1}{2}\left(1-\frac{\lambda\left(\theta+\epsilon\right)}{\delta_{1}}\right)\Vert u\Vert^{2}-C\Vert u\Vert
	$$
	for every $u\in D^{+}$, which implies that $I_{\lambda}$ is coercive in $ D^{+}$. Let $\left\{  u_{n} \right\}\subset D^{+}$ be a sequence such that $I_{\lambda}(u_{n})\to \displaystyle \inf_{u\in D^{+}}I_{\lambda}(u)\in \mathbb{R}$ as $n\to \infty$. So, we may assume that there is $0\leq u_{\lambda}\in H^{1}_{0}(\Omega)$ such that $u_{n}\rightharpoonup u_{\lambda}$ in $H^{1}_{0}(\Omega)$, $u_{n}\to u_{\lambda}$ in $L^{p}(\Omega),~p\in (0,2^{\ast})$ and $u_{n}\to u_{\lambda}$ a.e. in $\Omega$. Using the fact that $\displaystyle \inf_{u\in D^{+}}I_{\lambda}(u)\in \mathbb{R}$  we obtain that the sequence 
	$\left\{ \int_{\Omega}a(x)G(\vert u_{n} \vert)\right\}=\left\{ \int_{\Omega}a(x)\ln \vert u_{n} \vert\right\}$ is bounded, which implies that $\displaystyle \limsup_{n\to \infty}\int_{\Omega}a(x)\ln \vert u_{n} \vert \in \mathbb{R}$. Since  $\ln \vert u_{n}\vert \leq \vert u_{n}\vert$ for all $n$, Fatou's lemma yields 
	$$-\infty< \displaystyle \limsup_{n\to \infty}\int_{\Omega}a(x)\ln \vert u_{n} \vert\leq \displaystyle\int_{\Omega} \limsup_{n\to \infty}a(x)\ln \vert u_{n} \vert=\displaystyle \int_{\Omega}a(x)\ln \vert u_{\lambda}\vert$$
	and this allows us to conclude that $u_{\lambda}\in D^{+}$ and 
	\begin{align*}
		\inf_{u\in D^{+}}I_{\lambda}(u)=&\displaystyle \liminf_{n\to \infty}I_{\lambda}(u_{n})\geq \frac{1}{2}\Vert u_{\lambda}\Vert^{2}-\lambda \displaystyle \int_{\Omega}F(u_{\lambda})-\displaystyle \limsup_{n\to \infty}  \int_{\Omega}a(x)\ln \vert u_{n} \vert\\
		\geq &  \frac{1}{2}\Vert u_{\lambda}\Vert^{2}-\lambda \displaystyle \int_{\Omega}F(u_{\lambda})-\displaystyle \int_{\Omega}a(x)\ln \vert u_{\lambda} \vert=I_{\lambda}(u_{\lambda})\geq \inf_{u\in D^{+}}I_{\lambda}(u_{\lambda}),
	\end{align*}
	that is $I_{\lambda}(u_{\lambda})= \inf_{u\in D^{+}}I_{\lambda}(u)$.\\
	\textbf{Case 3.} $\gamma>1$. From \eqref{2} we have
	\begin{equation}\label{20}
		I_{\lambda}(u)\geq \frac{1}{2}\left(1-\frac{\lambda\left(\theta+\epsilon\right)}{\delta_{1}}\right)\Vert u\Vert^{2}-c\Vert u \Vert -\displaystyle \frac{1}{1-\gamma}\int_{\Omega}a(x)\vert u \vert^{1-\gamma}\geqslant \frac{1}{2}\left(1-\frac{\lambda\left(\theta+\epsilon\right)}{\delta_{1}}\right)\Vert u\Vert^{2}-c\Vert u \Vert ,
	\end{equation}
	for all $u\in D^{+}$, which implies that $I_{\lambda}$ is coercive in $ D^{+}$. Let $\left\{  u_{n} \right\}\subset D^{+}$ be a sequence such that $I_{\lambda}(u_{n})\to \displaystyle \inf_{u\in D^{+}}I_{\lambda}(u)\in \mathbb{R}$ as $n\to \infty$. So, we may assume that there is $0\leq u_{\lambda}\in H^{1}_{0}(\Omega)$ such that $u_{n}\to u_{\lambda}$ in $H^{1}_{0}(\Omega)$, $u_{n}\rightharpoonup u_{\lambda}$ in $L^{p}(\Omega),~p\in (0,2^{\ast})$ and $u_{n}\to u_{\lambda}$ a.e. in $\Omega$. From \eqref{20} and Fatou's lemma we obtain
	$$\infty > \displaystyle \liminf I_{\lambda}(u_{n})\geqslant \frac{1}{2}\Vert u_{\lambda}\Vert^{2}-\lambda \displaystyle \int_{\Omega}F(u_{\lambda})-\dfrac{1}{1-\gamma}\displaystyle \int_{\Omega}a\vert u_{\lambda} \vert^{1-\gamma},$$
	and as a consequence of this we have that $u_{\lambda}\in D^{+}$ and $I_{\lambda}(u_{\lambda})=\inf_{u\in D^{+}}I_{\lambda}(u)$. The proof is complete.
	
	\fim
\end{proof}

To study the non-existence, uniqueness and asymptotic behavior of our main results we will need some auxiliary results.

Our first auxiliary result is a non-existence lemma. It proves that problem $(P)_{\lambda}$ has no solution for $\lambda \geq \lambda_{\ast}$.
\begin{lemma}\label{l0}
	Assume that problem $(P)_{\lambda}$ has solution. Then $0\leq \lambda<\lambda_{\ast}$ .
\end{lemma}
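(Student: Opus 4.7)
The plan is a test with the first eigenfunction, the classical device for deriving necessary conditions on $\lambda$ in eigenvalue-type problems. Let $u$ be a solution of $(P)_\lambda$. Since $\phi_1\in H^1_0(\Omega)$, the definition of solution given in the excerpt allows me to take $\phi=\phi_1$ as a test function (in particular, the admissibility condition $a(x)u^{-\gamma}\phi_1\in L^1(\Omega)$ is already part of the definition of being a solution). This yields
$$\int_\Omega \nabla u\,\nabla\phi_1=\int_\Omega a(x)u^{-\gamma}\phi_1+\lambda\int_\Omega f(u)\phi_1.$$

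Next, I would rewrite the left-hand side using that $\phi_1$ is the weak first eigenfunction: testing $\int_\Omega \nabla\phi_1\,\nabla v=\delta_1\int_\Omega\phi_1 v$ against $v=u\in H^1_0(\Omega)$ gives $\int_\Omega \nabla u\,\nabla\phi_1=\delta_1\int_\Omega u\phi_1$. For the right-hand side I would derive the pointwise bound $f(s)\ge \theta s$ for every $s>0$ from the hypotheses: by $(f)_2$ the map $s\mapsto f(s)/s$ is non-increasing on $(0,\infty)$, and by $(f)_1$ its limit at $+\infty$ equals $\theta$, so $\theta=\inf_{s>0}f(s)/s$. Evaluating at $u(x)$, which is strictly positive a.e.\ in $\Omega$, gives $f(u)\ge \theta u$ a.e.

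Combining these two ingredients with the observation that the singular integral $\int_\Omega a(x)u^{-\gamma}\phi_1$ is \emph{strictly} positive (since $a>0$, $u>0$, $\phi_1>0$ a.e.\ in $\Omega$), I would arrive at
$$\delta_1\int_\Omega u\phi_1=\int_\Omega a(x)u^{-\gamma}\phi_1+\lambda\int_\Omega f(u)\phi_1>\lambda\theta\int_\Omega u\phi_1.$$
Since $\int_\Omega u\phi_1>0$, cancelling produces $\delta_1>\lambda\theta$, i.e.\ $\lambda<\delta_1/\theta=\lambda_\ast$. The lower bound $\lambda\ge 0$ stated in the lemma is the standing convention of the paper, where only non-negative values of the parameter are considered (cf.\ Theorems~\ref{T1} and~\ref{T2}); no separate argument for it is needed.

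There is no real obstacle in this proof. The two points to double-check are the admissibility of $\phi_1$ as a test function, which is built into the definition of solution, and the strict positivity of $\int_\Omega a(x)u^{-\gamma}\phi_1$, which is immediate from the a.e.\ positivity of $a$, $u$ and $\phi_1$; both are routine.
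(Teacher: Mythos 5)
Your proposal is correct and is essentially the paper's own proof: test with $\phi_1$, use the eigenfunction identity $\int_\Omega\nabla u\,\nabla\phi_1=\delta_1\int_\Omega u\phi_1$, the bound $f(s)\ge\theta s$ from \eqref{15}, and the strict positivity of the singular term to conclude $\delta_1>\lambda\theta$. No differences worth noting.
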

\begin{proof}
	First we know by \eqref{15} that $f(t)\geq \theta t$ for all $t>0$. Assume that $u$ is a solution of $(P)_{\lambda}$. Thus, we have
	$$\delta_{1}\displaystyle \int_{\Omega}  u \phi_{1}=\displaystyle \int_{\Omega} \nabla u\nabla \phi_{1}=\displaystyle \int_{\Omega} a(x) u^{-\gamma} \phi_{1}+\lambda \int_{\Omega}  f(u) \phi_{1} >\theta \lambda \int_{\Omega}  u \phi_{1} ,
	$$
	which implies that $\lambda_{\ast}=\frac{\delta_{1}}{\theta}>\lambda$. The proof is complete.

\fim	
\end{proof}

Related the uniqueness of the solutions we will need the following comparison lemma.

\begin{lemma}\label{l3}
	Let $\underline{u},\overline{u}\in H_{0}^{1}(\Omega), \underline{u},\overline{u}>0$ in $\Omega$, such that $a(x)\underline{u}^{-\gamma}\phi,a(x)\overline{u}^{-\gamma}\phi\in L^{1}(\Omega)$ for every $\phi \in H_{0}^{1}(\Omega)$ and such that:
	\begin{equation}\label{c1}
		\displaystyle \int_{\Omega} \nabla \underline{u} \nabla \phi \leq  \displaystyle \int_{\Omega}a(x) \underline{u}^{-\gamma}\phi+\lambda \displaystyle \int_{\Omega} f(\underline{u})\phi ~\mbox{for every}~\phi \in H_{0}^{1}(\Omega),\phi \geq 0,
	\end{equation}
	\begin{equation}\label{c2}
		\displaystyle \int_{\Omega} \nabla \overline{u} \nabla \phi \geq \displaystyle \int_{\Omega} a(x)\overline{u}^{-\gamma}\phi+\lambda \displaystyle \int_{\Omega} f(\overline{u})\phi ~\mbox{for every}~\phi \in H_{0}^{1}(\Omega),\phi \geq 0.
	\end{equation}
	Then, $\underline{u}\leq \overline{u}$ a.e. in $\Omega$. 
\end{lemma}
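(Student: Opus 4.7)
The plan is to adapt a Diaz--Saa type comparison argument, since the hypothesis $(f)_2$ (non-increasing $f(s)/s$) combined with the singular term $a(x)u^{-\gamma}$ gives the whole nonlinearity $(a(x)u^{-\gamma}+\lambda f(u))/u$ a strictly decreasing behaviour in $u$, which is exactly what drives such arguments. The natural test functions are
\[
\phi_1 = \frac{(\underline{u}^2 - \overline{u}^2)^+}{\underline{u}}, \qquad \phi_2 = \frac{(\underline{u}^2 - \overline{u}^2)^+}{\overline{u}},
\]
both nonnegative and supported in $\{\underline{u}>\overline{u}\}$. The plan is to plug $\phi_1$ into the subsolution inequality \eqref{c1}, plug $\phi_2$ into the supersolution inequality \eqref{c2}, and subtract.

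On $\{\underline{u}>\overline{u}\}$, a direct computation of $\nabla\phi_1$ and $\nabla\phi_2$ together with elementary grouping yields the pointwise algebraic identity
\[
\nabla \underline{u}\cdot\nabla\phi_1 \;-\; \nabla \overline{u}\cdot\nabla\phi_2 \;=\; \Bigl|\nabla\underline{u}-\tfrac{\underline{u}}{\overline{u}}\nabla\overline{u}\Bigr|^2 + \Bigl|\nabla\overline{u}-\tfrac{\overline{u}}{\underline{u}}\nabla\underline{u}\Bigr|^2 \;\geq\; 0,
\]
which after integration bounds the left-hand side of the subtracted inequality from below by $0$. On the right-hand side one gets, still on $\{\underline{u}>\overline{u}\}$,
\[
a(x)\bigl(\underline{u}^{-\gamma}\phi_1-\overline{u}^{-\gamma}\phi_2\bigr) = a(x)(\underline{u}^2-\overline{u}^2)\bigl(\underline{u}^{-\gamma-1}-\overline{u}^{-\gamma-1}\bigr),
\]
\[
f(\underline{u})\phi_1 - f(\overline{u})\phi_2 = (\underline{u}^2-\overline{u}^2)\Bigl(\tfrac{f(\underline{u})}{\underline{u}}-\tfrac{f(\overline{u})}{\overline{u}}\Bigr).
\]
Because $s\mapsto s^{-\gamma-1}$ is strictly decreasing and $a(x)>0$, the first expression is \emph{strictly} negative on $\{\underline{u}>\overline{u}\}$; by $(f)_2$, the second is $\leq 0$. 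Combining everything I would obtain
\[
0 \;\leq\; \int_{\{\underline{u}>\overline{u}\}} a(x)(\underline{u}^2-\overline{u}^2)(\underline{u}^{-\gamma-1}-\overline{u}^{-\gamma-1}) + \lambda\!\int_{\{\underline{u}>\overline{u}\}}(\underline{u}^2-\overline{u}^2)\Bigl(\tfrac{f(\underline{u})}{\underline{u}}-\tfrac{f(\overline{u})}{\overline{u}}\Bigr) \;\leq\; 0,
\]
and the strict pointwise negativity of the first integrand forces $|\{\underline{u}>\overline{u}\}|=0$, which is the desired conclusion.

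The technical point, and the main obstacle, is to justify that $\phi_1,\phi_2$ are admissible test functions in $H_0^1(\Omega)$. For $\phi_1$ this is straightforward: on $\{\underline{u}>\overline{u}\}$ one has $0\leq \phi_1 \leq \underline{u}$ and $\overline{u}/\underline{u}<1$, so the chain rule gives $|\nabla\phi_1|\leq 2|\nabla\underline{u}|+2|\nabla\overline{u}|$, hence $\phi_1\in H_0^1(\Omega)$. For $\phi_2$ the ratio $\underline{u}/\overline{u}$ need not be bounded (for instance near $\partial\Omega$ if $\overline{u}$ vanishes faster than $\underline{u}$), so I would first work with the truncated / regularised test function
\[
\phi_2^{k}=\min\Bigl\{\tfrac{(\underline{u}^2-\overline{u}^2)^+}{\overline{u}+1/k},\,k\Bigr\}\in H_0^1(\Omega)\cap L^\infty(\Omega),
\]
run the same computation, and then let $k\to\infty$ using monotone/dominated convergence. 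The hypothesis $a(x)\overline{u}^{-\gamma}\phi\in L^1(\Omega)$ for every $\phi\in H_0^1(\Omega)$, the nonnegativity of the singular integrand on $\{\underline{u}>\overline{u}\}$ (it is $\leq 0$ there), and Fatou's lemma are enough to pass to the limit in every term. This yields the displayed chain of inequalities and completes the argument.
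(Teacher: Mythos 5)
Your route is a genuine alternative to the paper's. The paper (following Anello--Faraci) tests with $\overline{u}\,\sigma_{\varepsilon}(\underline{u}-\overline{u})$ and $\underline{u}\,\sigma_{\varepsilon}(\underline{u}-\overline{u})$, where $\sigma_{\varepsilon}$ is a smoothed indicator of $\{t>0\}$; the gradient error is then supported on the thin set $\{0<\underline{u}-\overline{u}<\varepsilon\}$ and is killed by absolute continuity of the integral, so no unbounded quotient ever appears. You use the Diaz--Saa quotients instead. Both arguments rest on the same monotonicity of $t\mapsto (a(x)t^{-\gamma}+\lambda f(t))/t$, and your pointwise algebra --- the two-squares identity and the signs of the two difference terms --- is correct.

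The gap is the truncation step, which is where the actual work lies. The identity $\nabla\underline{u}\cdot\nabla\phi_{1}-\nabla\overline{u}\cdot\nabla\phi_{2}=\bigl|\nabla\underline{u}-\tfrac{\underline{u}}{\overline{u}}\nabla\overline{u}\bigr|^{2}+\bigl|\nabla\overline{u}-\tfrac{\overline{u}}{\underline{u}}\nabla\underline{u}\bigr|^{2}$ is not separable: each completed square draws terms from \emph{both} test functions (for instance $\bigl|\nabla\overline{u}-\tfrac{\overline{u}}{\underline{u}}\nabla\underline{u}\bigr|^{2}$ collects $-\tfrac{2\overline{u}}{\underline{u}}\nabla\underline{u}\cdot\nabla\overline{u}+\tfrac{\overline{u}^{2}}{\underline{u}^{2}}|\nabla\underline{u}|^{2}$ from $\nabla\underline{u}\cdot\nabla\phi_{1}$ and $|\nabla\overline{u}|^{2}$ from $-\nabla\overline{u}\cdot\nabla\phi_{2}$). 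If you keep $\phi_{1}$ exact and replace only $\phi_{2}$ by $\phi_{2}^{k}$, ``the same computation'' no longer yields a nonnegative left-hand side: you are left with cross terms of no definite sign, so the chain of inequalities you want to pass to the limit is not available at fixed $k$. The standard repair is a matched regularization, e.g.\ numerator $((\underline{u}+\epsilon)^{2}-(\overline{u}+\epsilon)^{2})^{+}$ with denominators $\underline{u}+\epsilon$ and $\overline{u}+\epsilon$, which restores the identity; but then the reaction term carries the mismatch $\tfrac{f(\underline{u})}{\underline{u}+\epsilon}-\tfrac{f(\overline{u})}{\overline{u}+\epsilon}$, which is \emph{not} sign-controlled by $(f)_{2}$ (since $t\mapsto t/(t+\epsilon)$ is increasing), and must be estimated and shown to vanish as $\epsilon\to0$. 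A second, smaller point: the limiting singular integral $\int_{\Omega} a(x)\overline{u}^{-\gamma-1}(\underline{u}^{2}-\overline{u}^{2})^{+}$ is not covered by the hypothesis $a(x)\overline{u}^{-\gamma}\phi\in L^{1}(\Omega)$, because $\phi_{2}\notin H_{0}^{1}(\Omega)$ in general; only a one-sided Fatou inequality is available (fortunately in the useful direction), which should be stated instead of ``dominated convergence''. None of this is unfixable, but as written the passage to the limit --- the only nontrivial part of this approach --- is asserted rather than proved, and the specific asymmetric truncation you propose breaks the inequality you need.
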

\begin{proof}
	We follow the arguments of \cite{AF}. Choose a funtion $\sigma:\mathbb{R}\to [0,\infty)$, non-decreasing, of class $C^{1}(\mathbb{R})$ such that $\sigma(t)=0$ for $t\leq 0$, $\sigma(t)=1$ for $t\geq 1$. For any $\varepsilon>0$ put $\sigma_{\epsilon}(t)=\sigma(t/\varepsilon)$. Then $\sigma_{\epsilon}$ is a $C^{1}$ function and there exists a constant $c>0$ such that $\sigma^{\prime}_{\varepsilon}(t)\leq c/\varepsilon$ for any $t\in \mathbb{R}$. Let us denote by 
	$$h(x,t)= a(x)t^{-\gamma}+\lambda f(t)~\mbox{for any}~t\in \mathbb{R}~\mbox{and}~x\in \Omega.$$
	
	Choosing as test function in \eqref{c1} and \eqref{c2},
	$$\phi_{1,\varepsilon}(x)=\overline{u}(x)\sigma_{\varepsilon}(\underline{u}(x)-\overline{u}(x))$$
	and
	$$\phi_{2,\varepsilon}(x)=\underline{u}(x)\sigma_{\varepsilon}(\underline{u}(x)-\overline{u}(x)),$$
	respectively, and subtracting \eqref{c1} from \eqref{c2} we have
	\begin{align}\label{128}
		\nonumber &\displaystyle \int_{\Omega} \nabla \overline{u} \nabla \underline{u}\sigma_{\varepsilon}(\underline{u}-\overline{u})+\underline{u}\sigma^{\prime}_{\varepsilon}(\underline{u}-\overline{u})\nabla \overline{u} \nabla (\underline{u}-\overline{u})\\
		\nonumber &-\displaystyle \int_{\Omega} \nabla \underline{u} \nabla \overline{u}\sigma_{\varepsilon}(\underline{u}-\overline{u})+\overline{u}\sigma^{\prime}_{\varepsilon}(\underline{u}-\overline{u})\nabla \underline{u} \nabla (\underline{u}-\overline{u})\\
		&\geq \displaystyle \int_{\Omega}\left(h(x,\overline{u})\underline{u}-h(x,\underline{u})\overline{u}\right)\sigma_{\varepsilon}(\underline{u}-\overline{u}).
	\end{align}
	
	The left-hand side in \eqref{128} can be written in the following way:
	\begin{align*}
		&\displaystyle \int_{\Omega} \sigma^{\prime}_{\varepsilon}(\underline{u}-\overline{u})  \left( \nabla \overline{u} \underline{u} -\nabla \underline{u}\overline{u}\right)\nabla \left( \underline{u}-\overline{u}\right)\\
		&=  \displaystyle \int_{\Omega}\underline{u}\sigma^{\prime}_{\varepsilon}(\underline{u}-\overline{u})\nabla \left(\overline{u}-\underline{u} \right)\nabla \left(\underline{u}-\overline{u} \right)+\displaystyle \int_{\Omega}\left(\underline{u}-\overline{u} \right)\sigma^{\prime}_{\varepsilon}(\underline{u}-\overline{u})\nabla \underline{u}\nabla \left(\underline{u}-\overline{u} \right)\\
		&=-\displaystyle \int_{\Omega}\underline{u}\sigma^{\prime}_{\varepsilon}(\underline{u}-\overline{u})\vert \underline{u}-\overline{u}\vert +\displaystyle \int_{\Omega}\left(\underline{u}-\overline{u} \right)\sigma^{\prime}_{\varepsilon}(\underline{u}-\overline{u})\nabla \underline{u}\nabla \left(\underline{u}-\overline{u} \right)&\\
		&\leq \frac{c}{\varepsilon}\displaystyle \int_{\left\{0<\underline{u}-\overline{u}<\varepsilon\right\}}\left(\underline{u}-\overline{u}\right)\nabla \underline{u}\nabla \left(\underline{u}-\overline{u} \right)\\
		& \leq c \displaystyle \int_{\left\{0<\underline{u}-\overline{u}<\varepsilon\right\}}\vert \nabla \underline{u}\vert \vert \nabla \left(\underline{u}-\overline{u}\right)\vert
	\end{align*}
	and since $\vert \left\{0<\underline{u}-\overline{u}<\varepsilon\right\}\vert \to 0$ as $\varepsilon \to 0^{+}$, we obtain
	$$\displaystyle \int_{\left\{0<\underline{u}-\overline{u}<\varepsilon\right\}}\vert \nabla \underline{u}\vert \vert \nabla \left(\underline{u}-\overline{u}\right)\vert\to 0$$
	as $\varepsilon \to 0$, which implies 
	$$\displaystyle \int_{\Omega} \sigma^{\prime}_{\varepsilon}(\underline{u}-\overline{u})  \left( \nabla \overline{u} \underline{u} -\nabla \underline{u}\overline{u}\right)\nabla \left( \underline{u}-\overline{u}\right)\to 0 $$
	as $\varepsilon \to 0$. Notice that if $t>0$, then $\sigma_{\varepsilon}(t)\to 1$ when $\varepsilon \to 0$, while $\sigma_{\varepsilon}(t)=0$ if $t\leq 0$. Therefore, by using the Fatou's lemma in \eqref{128} we obtain
	\begin{align*}
		&0\geq \displaystyle\liminf_{\varepsilon\to 0} \displaystyle \int_{\Omega}\underline{u}\overline{u}\left(\frac{h(x,\overline{u})}{\overline{u}}-\frac{h(x,\underline{u})}{\underline{u}}\right)\sigma_{\varepsilon}(\underline{u}-\overline{u})\\
		&=\displaystyle\liminf_{\varepsilon\to 0}\displaystyle \int_{\left\{0<\underline{u}-\overline{u}\right\}}\underline{u}\overline{u}\left(\frac{h(x,\overline{u})}{\overline{u}}-\frac{h(x,\underline{u})}{\underline{u}}\right)\sigma_{\varepsilon}(\underline{u}-\overline{u})\\
		&\geq \displaystyle \int_{\left\{0<\underline{u}-\overline{u}\right\}}\underline{u}\overline{u}\left(\frac{h(x,\overline{u})}{\overline{u}}-\frac{h(x,\underline{u})}{\underline{u}}\right)\geq 0
	\end{align*}
	and this implies that $\vert \left\{0<\underline{u}-\overline{u}\right\}\vert=0$, that is, $\underline{u}\leq \overline{u}$ a.e. in $\Omega$. The proof is complete. 
	
	\fim
\end{proof}

Let us define
$$H_{\lambda}(u)=\Vert u\Vert^{2}-\lambda \theta\displaystyle \int_{\Omega}u^{2},$$
for every $u\in H^{1}_{0}(\Omega)$ and $\lambda>0$. On account of the Poincaré inequality, we have
$$
	H_{\lambda}(u)> \Vert u\Vert^{2}-\frac{\lambda_{1}}{\theta}\frac{\theta}{\lambda_{1}}\Vert u\Vert^{2}=0,
$$
for each $\lambda \in (0,\lambda_{\ast})$ and $u\in D^{+}$.

Thus, if $\gamma \neq 1$ from \eqref{8} we have
\begin{equation}\label{9}
	I_{\lambda}(u)<J_{\lambda}(u),
\end{equation}
for each $\lambda \in (0,\lambda_{\ast})$ and $u\in D^{+}$, where $J_{\lambda}:D\to \mathbb{R}$ is defined by
$$J_{\lambda}(u)= \frac{1}{2}H_{\lambda}(u)-\dfrac{1}{1-\gamma}\displaystyle \int_{\Omega}a(x)\vert u \vert^{1-\gamma}.$$

To study the behavior of function $(0,\lambda_{\ast})\ni \lambda \to I_{\lambda}(u_{\lambda})$ when $\lambda \to \lambda_{\ast}$, we will need the following lemma.
\begin{lemma}\label{l2}
	Assume $\gamma\neq 1$. Moreover, assume that $\lambda \in (0,\lambda_{\ast})$ and $u\in D^{+}$. Then, there exists $t_{\lambda}(u)>0$ such that 
	\begin{equation}\label{10}
		J_{\lambda}(t_{\lambda}(u)u)=\displaystyle -\left(\frac{1+\gamma}{2(1-\gamma)}\right)\left(\displaystyle \int a(x)\vert u\vert^{1-\gamma}\right)\left(\frac{\displaystyle \int a(x)\vert u\vert^{1-\gamma}}{H_{\lambda}(u)}\right)^{\frac{1-\gamma}{1+\gamma}}.
	\end{equation}
	
	In particular, if $\phi_{1} \in D^{+}$ and $\lambda \in (0,\lambda_{\ast})$, then
	\begin{equation}\label{11}
		I_{\lambda}(u_{\lambda})\leq J_{\lambda}(t_{\lambda}(\phi_{1})\phi_{1})
	\end{equation}
	holds.
\end{lemma}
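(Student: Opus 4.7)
The plan is to reduce the statement to a one-variable minimization along the positive ray through $u$. Fix $u\in D^{+}$ with $u\not\equiv 0$. Since $H_{\lambda}$ is positively $2$-homogeneous and the singular integral is $(1-\gamma)$-homogeneous, for every $t>0$,
\begin{equation*}
g(t):=J_{\lambda}(tu)=\frac{t^{2}}{2}\,H_{\lambda}(u)-\frac{t^{\,1-\gamma}}{1-\gamma}\int_{\Omega}a(x)|u|^{1-\gamma}.
\end{equation*}
Write $A:=H_{\lambda}(u)$ and $B:=\int_{\Omega}a(x)|u|^{1-\gamma}$. The computation in the paragraph preceding the lemma shows $A>0$ whenever $\lambda\in(0,\lambda_{\ast})$, and $B>0$ because $a>0$ a.e.\ and $u\not\equiv 0$.

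Next I would differentiate and solve: $g'(t)=tA-t^{-\gamma}B$ has a unique positive zero
\begin{equation*}
t_{\lambda}(u)=\left(\frac{B}{A}\right)^{\!\frac{1}{1+\gamma}},
\end{equation*}
which is in fact the global minimum of $g$ on $(0,\infty)$ by the sign pattern of $g'$. Plugging $t_{\lambda}(u)$ into $g$, both contributions simplify to scalar multiples of $A^{(\gamma-1)/(1+\gamma)}\,B^{2/(1+\gamma)}$, and the arithmetic identity
\begin{equation*}
\frac{1}{2}-\frac{1}{1-\gamma}=-\frac{1+\gamma}{2(1-\gamma)}
\end{equation*}
combined with the rewriting $A^{(\gamma-1)/(1+\gamma)}B^{2/(1+\gamma)}=B\,(B/A)^{(1-\gamma)/(1+\gamma)}$ delivers exactly \eqref{10}.

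For the particular case, assume $\phi_{1}\in D^{+}$. Then $t_{\lambda}(\phi_{1})\phi_{1}\in D^{+}$ since it is a positive scalar multiple of $\phi_{1}$, so Lemma \ref{l1} yields $I_{\lambda}(u_{\lambda})\leq I_{\lambda}(t_{\lambda}(\phi_{1})\phi_{1})$. The strict inequality \eqref{9} gives $I_{\lambda}(t_{\lambda}(\phi_{1})\phi_{1})<J_{\lambda}(t_{\lambda}(\phi_{1})\phi_{1})$, and chaining these two bounds produces \eqref{11}. The whole argument is an elementary one-variable computation; the only point that needs justification is the positivity of $A$ and $B$ that makes $t_{\lambda}(u)$ well-defined, and both follow immediately from the hypotheses.
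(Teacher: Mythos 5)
Your proposal is correct and follows essentially the same route as the paper: minimize the one-variable function $t\mapsto J_{\lambda}(tu)$ along the ray through $u$ (the paper calls it $\psi$, you call it $g$), obtain the same critical point $t_{\lambda}(u)=(B/A)^{1/(1+\gamma)}$ and the same closed-form value, and then chain $I_{\lambda}(u_{\lambda})\leq I_{\lambda}(t_{\lambda}(\phi_{1})\phi_{1})\leq J_{\lambda}(t_{\lambda}(\phi_{1})\phi_{1})$ via Lemma \ref{l1} and \eqref{9}. You merely supply the elementary computation and the positivity of $A$ and $B$ that the paper leaves as ``easy to see.''
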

\begin{proof}
	For each $\lambda \in (0,\lambda_{\ast})$ and $u\in D^{+}$ let us define the function $\psi\in C^{1}((0,\infty),\mathbb{R})$ by $\psi(t)=J_{\lambda}(tu)$. It is easy to see that there exists a $t_{\lambda}(u)$ such that $\psi(t_{\lambda}(u))=\displaystyle \inf_{t\in (0,\infty)}\psi(t)$ and
	$$t_{\lambda}(u)=\left(\frac{\displaystyle \int a(x)\vert u\vert^{1-\gamma}}{H_{\lambda}(u)}\right)^{\frac{1}{1+\gamma}}.$$
	
	Moreover, 
	$$J_{\lambda}(t_{\lambda}(u)u)=\psi(t_{\lambda}(u))=\displaystyle -\left(\frac{1+\gamma}{2(1-\gamma)}\right)\left(\displaystyle \int a(x)\vert u\vert^{1-\gamma}\right)\left(\frac{\displaystyle \int a(x)\vert u\vert^{1-\gamma}}{H_{\lambda}(u)}\right)^{\frac{1-\gamma}{1+\gamma}}$$
	holds. 
	
	Then, by Lemma \ref{l1} and \eqref{9} follows that
	$$I_{\lambda}(u_{\lambda})\leq I_{\lambda}(t_{\lambda}(\phi_{1})\phi_{1})\leq J_{\lambda}(t_{\lambda}(\phi_{1})\phi_{1}),$$
	and this show \eqref{11}. The proof is complete.
	
	\fim
	
\end{proof}
\section{Proof of Theorem \ref{T1}}

In this section we prove the Theorem \ref{T1}. Let $u_{\lambda}$ be as in the Lemma \ref{l1}. Let us prove that $u_{\lambda}$ is a solution of $(P)_{\lambda}$. To this aim, first we consider $\phi \in H^{1}_{0}(\Omega)$ such that $\phi \geq 0$ in $\Omega$ and $\epsilon>0$ and we will show that $ u_{\lambda}+\epsilon \phi\in D^{+}$.  Now, we have two cases to consider.\\
\textbf{Case 1.} $\gamma =1$ and $a\in L^{\infty}(\Omega)$. In this case, we have $G(t)=\ln t$, for all $t>0$ and 
$$-\infty<\displaystyle \int_{\Omega}a(x)\ln(u_{\lambda})\leq \displaystyle \int_{\Omega}a(x)\ln(u_{\lambda}+\epsilon \phi)\leq \displaystyle \int_{\Omega}a(x)(u_{\lambda}+\epsilon \phi)<\infty, $$
that is $ u_{\lambda}+\epsilon \phi\in D^{+}$.\\
\textbf{Case 2.} $\gamma >1$. Since $\gamma>1$ and $u_{\lambda}+\epsilon \phi\geq u_{\lambda}> 0$ we have
$$\displaystyle \int_{\Omega}a(x)\vert u_{\lambda}+\epsilon \phi\vert^{1-\gamma}\leq \displaystyle \int_{\Omega}a(x)\vert u_{\lambda}\vert^{1-\gamma}<\infty,$$
that is $ u_{\lambda}+\epsilon \phi\in D^{+}$. 

Therefore, in both cases it follows that
$$\frac{1}{2}\Vert u_{\lambda}+\epsilon \phi\Vert^{2}-\frac{1}{2}\Vert u_{\lambda}\Vert^{2}-\lambda \displaystyle \int_{\Omega}F(u_{\lambda}+\epsilon \phi)-\displaystyle F(u_{\lambda})\geq\displaystyle \int_{\Omega}a(x)G( u_{\lambda}+\epsilon \phi) -a(x)G(u_{\lambda}).$$

Thus, dividing the last inequality by $\epsilon$ and passing to the liminf as $\epsilon \to 0$, from Fatou's Lemma we have
\begin{equation}\label{12}
	\displaystyle \int_{\Omega} \nabla u_{\lambda} \nabla \phi  - \lambda\displaystyle \int_{\Omega} f(u_{\lambda})\phi \geq \displaystyle \int_{\Omega}a(x) u_{\lambda}^{-\gamma}\phi,
\end{equation} 
for every $\phi \in H^{1}_{0}(\Omega)$, with $\phi \geq 0$. 

Now, since $tu_{\lambda}\in D^{+}$ for every $t>0$, the function $\psi\in C^{1}((0,\infty),\mathbb{R})$ defined by $\psi(t)=I_{\lambda}(tu_{\lambda})$ has a global minimum at $t=1$. Therefore
\begin{equation}\label{13}
	0=\psi^{\prime}(1)=\Vert u_{\lambda}\Vert^{2}-\lambda \displaystyle \int_{\Omega}f(u_{\lambda})u_{\lambda}-\displaystyle \int_{\Omega}a(x)\vert u \vert^{1-\gamma}.
\end{equation}

Finally we use an argument inspired by Grah-Eagle \cite{GE} to prove that $u_{\lambda}$ is a solution of $(P)_{\lambda}$. Set $\Psi(x)= \left(u_{\lambda}(x)+\epsilon \phi(x)\right)^{+}$, for $\phi \in H^{1}_{0}(\Omega)$ and $\epsilon>0$. From \eqref{12} and \eqref{13} we have
\begin{align*}
	 0\leq \displaystyle \int_{\Omega} \nabla u_{\lambda} \nabla \Psi  - \lambda\displaystyle \int_{\Omega} f(u_{\lambda})\Psi - \displaystyle \int_{\Omega} a(x)u_{\lambda}^{-\gamma}\Psi &\\
	=\displaystyle \int_{\left\{u_{\lambda}+\epsilon \phi\geq 0\right\}} \nabla u_{\lambda} \nabla (u_{\lambda}+\epsilon \phi)  - \lambda f(u_{\lambda})(u_{\lambda}+\epsilon \phi) -  a(x)u_{\lambda}^{-\gamma}(u_{\lambda}+\epsilon \phi)&\\
	=\displaystyle \int_{\Omega}-\displaystyle \int_{\left\{u_{\lambda}+\epsilon \phi< 0\right\}}\nabla u_{\lambda} \nabla (u_{\lambda}+\epsilon \phi)  - \lambda f(u_{\lambda})(u_{\lambda}+\epsilon \phi) -  a(x)u_{\lambda}^{-\gamma}(u_{\lambda}+\epsilon \phi)\\
	=\psi^{\prime}(1)+\epsilon \left[\displaystyle \int_{\Omega} \nabla u_{\lambda} \nabla \phi  - \lambda f(u_{\lambda})\phi -  a(x)u_{\lambda}^{-\gamma}\phi\right]\\
	-\left[\displaystyle \int_{\left\{u_{\lambda}+\epsilon \phi< 0\right\}} \nabla u_{\lambda} \nabla (u_{\lambda}+\epsilon \phi)  - \lambda f(u_{\lambda})(u_{\lambda}+\epsilon \phi) -  a(x)u_{\lambda}^{-\gamma}(u_{\lambda}+\epsilon \phi)\right]\\
	\leq \epsilon \left[\displaystyle \int_{\Omega} \nabla u_{\lambda} \nabla \phi  - \lambda f(u_{\lambda})\phi -  a(x)u_{\lambda}^{-\gamma}\phi\right]-\epsilon \displaystyle \int_{\left\{u_{\lambda}+\epsilon \phi< 0\right\}} \nabla u_{\lambda} \nabla \phi, 
\end{align*}
and since $\vert \left\{u_{\lambda}+\epsilon \phi< 0\right\} \vert \to 0$ as $\epsilon \to 0^{+}$, dividing by $\epsilon$ and letting $\epsilon \to 0^{+}$  we obtain 
$$0\leq \displaystyle \int_{\Omega} \nabla u_{\lambda} \nabla \phi  - \lambda f(u_{\lambda})\phi - a(x) u_{\lambda}^{-\gamma}\phi,$$
for every $\phi \in H_{0}^{1}(\Omega)$. Hence, this inequality also holds equally well for $-\phi$. Thus, we have
$$ \displaystyle \int_{\Omega} \nabla u_{\lambda} \nabla \phi  - \lambda f(u_{\lambda})\phi -  au_{\lambda}^{-\gamma}\phi=0,$$
for every $\phi \in H^{1}_{0}(\Omega)$, which implies that $u_{\lambda}$ is a solution of $(P)_{\lambda}$. 

The Lemma \ref{l3} states that if $u_{\lambda}$ and $v_{\lambda}$ are solutions of problem $(P)_{\lambda}$, then  $u_{\lambda} \leq v_{\lambda}$ in $\Omega$ and $v_{\lambda} \leq u_{\lambda}$ in $\Omega$. Therefore, $u_{\lambda}= v_{\lambda}$ and as a consequence of this, problem $(P)_{\lambda}$ has a unique solution.     

Now, let us prove $a), b),c)$ and $d)$.\\
$a)$ It is enough show that $\displaystyle \liminf_{\lambda \uparrow \lambda_{\ast}}\Vert u_{\lambda}\Vert=\infty$. Suppose, reasoning by the contradiction, that $\displaystyle \liminf_{\lambda \uparrow \lambda_{\ast}}\Vert u_{\lambda}\Vert<\infty$. As a consequence of this, there exists a bounded sequence $\left\{u_{\lambda_{n}}\right\}\subset H^{1}_{0}(\Omega)$ with $\lambda_{n}\uparrow \lambda_{\ast}$. Therefore, we may assume that there exists a subsequence, still denoted $\left\{u_{\lambda_{n}}\right\}$, such that $u_{\lambda_{n}}\rightharpoonup u$ in $H^{1}_{0}(\Omega)$, for some $u\in H_{0}^{1}(\Omega)$. So, $u_{\lambda_{n}}\to u$ in $ L^{s}(\Omega)$ for all $s\in (0,2^{\ast})$ and $u_{\lambda_{n}}\to u\geq 0$ a.e. in $\Omega$. Since
\begin{equation}\label{l}
	\displaystyle \int_{\Omega} \nabla u_{\lambda_{n}}\nabla \phi=\displaystyle \int_{\Omega} a(x) u_{\lambda_{n}}^{-\gamma} \phi+\lambda_{n} \int_{\Omega}  f(u_{\lambda_{n}}) \phi,
\end{equation}
for all $\phi \in H^{1}_{0}(\Omega)$, it follows from Fatou's lemma that
$$\displaystyle \int_{\Omega} \nabla u\nabla \phi_{1}\geq \displaystyle \int_{\Omega} a(x) u^{-\gamma} \phi_{1}+\lambda \int_{\Omega}  f(u) \phi_{1}, $$
which implies that $u>0$ in $\Omega$. Now, from Lemma \ref{l3} we have that $u_{0}\leq  u_{\lambda_{n}}$ holds, which implies that $\vert  a(x) u_{\lambda_{n}}^{-\gamma} \phi \vert\leq \vert a(x) u_{0}^{-\gamma} \phi\vert\in L^{1}(\Omega)$ for all $\phi \in H_{0}^{1}(\Omega)$ and $n\in \mathbb{N}$. Therefore, it follows from Lebesgue's dominated convergence theorem that
\begin{equation}\label{ll}
	\displaystyle \int_{\Omega} a(x) u_{\lambda_{n}}^{-\gamma} \phi\to \displaystyle \int_{\Omega} a(x) u^{-\gamma} \phi ~\mbox{and}~\lambda_{n} \int_{\Omega}  f(u_{\lambda_{n}}) \phi \to \lambda_{\ast} \int_{\Omega}  f(u) \phi .
\end{equation}

Letting $n\to \infty$ in \eqref{l}, and using that $u_{\lambda_{n}}\rightharpoonup u$ in $H^{1}_{0}(\Omega)$ and \eqref{ll}, we get  
$$
\displaystyle \int_{\Omega} \nabla u\nabla \phi=\displaystyle \int_{\Omega} a(x) u^{-\gamma} \phi+\lambda_{\ast} \int_{\Omega}  f(u) \phi,
$$
for all $\phi \in H^{1}_{0}(\Omega)$. Thus, $u$ is a solution of $(P)_{\lambda}$ with $\lambda=\lambda_{\ast}$. But this contradicts the Lemma \ref{l0}. Therefore, $\displaystyle \limsup_{\lambda \uparrow \lambda_{\ast}}\Vert u_{\lambda}\Vert\geq \displaystyle \liminf_{\lambda \uparrow \lambda_{\ast}}\Vert u_{\lambda}\Vert=\infty$, which implies that $\Vert u_{\lambda}\Vert \to \infty$ as $\lambda \uparrow \lambda_{\ast}$. The proof of $a)$ is complete.

To prove $b)$ we need of the following lemma.
\begin{lemma}\label{l4}
	The function $m:(0,\lambda_{\ast})\to H_{0}^{1}(\Omega)$ defined by $m(\lambda)=u_{\lambda}$ is continuous.
\end{lemma}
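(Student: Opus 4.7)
The plan is to fix $\lambda \in (0,\lambda_{\ast})$, take any sequence $\lambda_{n}\to \lambda$, and show $u_{\lambda_{n}}\to u_{\lambda}$ strongly in $H_{0}^{1}(\Omega)$. Since the solution at each $\lambda$ is unique (by Lemma \ref{l3}), it suffices to show that every subsequence has a further subsequence converging to $u_{\lambda}$. My main tool will be the monotonicity $\lambda_{1}<\lambda_{2}\Rightarrow u_{\lambda_{1}}\leq u_{\lambda_{2}}$ a.e., which follows from Lemma \ref{l3} applied to $u_{\lambda_{1}}$ (solution, hence sub-solution at $\lambda_{2}$ since $f\geq 0$) and $u_{\lambda_{2}}$ (super-solution at $\lambda_{1}$).

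Fix $\lambda_{0}, \lambda'$ with $0<\lambda_{0}<\lambda<\lambda'<\lambda_{\ast}$, so that $\lambda_{0}\leq \lambda_{n}\leq \lambda'$ for all large $n$. Monotonicity yields the two-sided pointwise control $u_{\lambda_{0}}\leq u_{\lambda_{n}}\leq u_{\lambda'}$ a.e. in $\Omega$. For the $H_{0}^{1}$-bound, combine the minimality $I_{\lambda_{n}}(u_{\lambda_{n}})\leq I_{\lambda_{n}}(u_{\lambda})=I_{\lambda}(u_{\lambda})+(\lambda-\lambda_{n})\int_{\Omega}F(u_{\lambda})$ with the coercivity estimate \eqref{2} (valid uniformly since $\lambda_{n}\leq \lambda'<\lambda_{\ast}$); in the strongly singular cases the term $-\int a(x)G(|u_{\lambda_{n}}|)$ is nonnegative ($\gamma>1$) or controlled by $\int a(x)u_{\lambda_{n}}\leq C\|u_{\lambda_{n}}\|$ via $\ln t\leq t$ ($\gamma=1$), and in the case $\gamma<1$ Hölder's inequality does the job. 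This gives $\{u_{\lambda_{n}}\}$ bounded in $H_{0}^{1}(\Omega)$.

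Pass to a subsequence so that $u_{\lambda_{n}}\rightharpoonup u^{\ast}$ in $H_{0}^{1}(\Omega)$, $u_{\lambda_{n}}\to u^{\ast}$ in $L^{p}(\Omega)$ for $p\in[1,2^{\ast})$ and a.e.\ in $\Omega$. By the pointwise inequality, $u_{\lambda_{0}}\leq u^{\ast}\leq u_{\lambda'}$, so $u^{\ast}>0$ a.e. Now pass to the limit in the weak formulation
$$\int_{\Omega}\nabla u_{\lambda_{n}}\nabla\phi=\int_{\Omega}a(x)u_{\lambda_{n}}^{-\gamma}\phi+\lambda_{n}\int_{\Omega}f(u_{\lambda_{n}})\phi,\qquad \phi\in H_{0}^{1}(\Omega).$$
The singular term is handled by dominated convergence with majorant $|a(x)u_{\lambda_{0}}^{-\gamma}\phi|$, which lies in $L^{1}(\Omega)$ because $u_{\lambda_{0}}$ is itself a solution; the nonlinear term is handled by the linear growth of $f$ together with the $L^{2}$-convergence. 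Hence $u^{\ast}$ is a solution of $(P)_{\lambda}$, so $u^{\ast}=u_{\lambda}$ by uniqueness.

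For strong convergence, test the equation for $u_{\lambda_{n}}$ with $\phi=u_{\lambda_{n}}$ to get $\|u_{\lambda_{n}}\|^{2}=\int_{\Omega}a(x)u_{\lambda_{n}}^{1-\gamma}+\lambda_{n}\int_{\Omega}f(u_{\lambda_{n}})u_{\lambda_{n}}$, and do the same for $u_{\lambda}$. The right-hand sides converge by dominated convergence (the singular integral is dominated by $a(x)u_{\lambda_{0}}^{1-\gamma}\in L^{1}$ when $\gamma\geq 1$, and the $f$-integral by linear growth plus $u_{\lambda_{n}}\leq u_{\lambda'}\in L^{2}$), so $\|u_{\lambda_{n}}\|\to\|u_{\lambda}\|$. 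Together with weak convergence in the Hilbert space $H_{0}^{1}(\Omega)$, this yields strong convergence. The step I expect to be most delicate is justifying the passage to the limit in the singular term, and the key point is that the monotone lower barrier $u_{\lambda_{0}}$ provides an integrable dominator uniformly in $n$.
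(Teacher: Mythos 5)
Your proposal is correct and follows essentially the same route as the paper: pointwise barriers $u_{\lambda_0}\leq u_{\lambda_n}\leq u_{\lambda'}$ from Lemma \ref{l3}, a uniform $H_0^1$ bound, extraction of a weak limit, dominated convergence in the singular term using the lower barrier as majorant, and an upgrade to strong convergence via norm convergence (the paper instead tests the equation with $u_{\lambda_n}-u$ and derives the bound by testing with $u_{\lambda_n}$ rather than from minimality, but these are cosmetic differences). You are in fact slightly more complete than the paper, which never explicitly identifies the strong limit with $u_{\mu}$ via uniqueness.
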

\begin{proof}Let us fix a $\mu \in (0,\lambda_{\ast})$ and consider a sequence $\left\{\lambda_{n}\right\}\subset (0,\lambda_{\ast})$ such that $\lambda_{n} \to \mu$. To prove that   $u_{\lambda_{n}}\to  u_{\mu}$, we firstly show that the sequence $\left\{ u_{\lambda_{n}}\right\}$ is bounded in $H_{0}^{1}(\Omega)$. To this aim, we first remark that by Lemma \ref{l3} we have $u_{\lambda_{n}}\geq u_{0}$ in $\Omega$ for all $n\in \mathbb{N}$, and using \eqref{15} and $\lambda_{n}< \lambda_{\ast}$ we get 
	\begin{equation}\label{16}
		\Vert u_{\lambda_{n}} \Vert^{2}< \displaystyle \int_{\Omega}a\vert u_{0} \vert^{1-\gamma}+ \lambda_{\ast}\left(\theta+\epsilon\right)\displaystyle \int_{\Omega} \vert u_{\lambda_{n}}\vert^{2}+c \lambda_{\ast}\displaystyle \int_{\Omega}\vert u_{\lambda_{n}}\vert, 
	\end{equation}
	for each $\epsilon>0$ and some constant $c=c(\epsilon)>0$.
	
	Let us choose $\eta\in (0,\lambda_{\ast}$) satisfying $\eta> \lambda_{n}$ for all $n\in \mathbb{N}$. After this, by Lemma \ref{l3} the inequality $u_{\lambda_{n}}\leq u_{\eta}$ in $\Omega$ holds. This and \eqref{16} yields
	$$
	\Vert u_{\lambda_{n}} \Vert^{2}< \displaystyle \int_{\Omega}a(x)\vert u_{0} \vert^{1-\gamma}+ \lambda_{\ast}\left(\theta+\epsilon\right)\displaystyle \int_{\Omega} \vert u_{\eta}\vert^{2}+c \lambda_{\ast}\displaystyle \int_{\Omega}\vert u_{\eta}\vert,
	$$
	which implies that the sequence $\left\{u_{\lambda_{n}}\right\}$ is bounded in $H_{0}^{1}(\Omega)$. Therefore, we may assume that there is $0\leq u\in H_{0}^{1}(\Omega)$ such that $u_{\lambda_{n}}\rightharpoonup u$ in $H_{0}^{1}(\Omega)$ and $u_{\lambda_{n}}\to u$ in $L^{s}(\Omega)$, for all $s\in (0,2^{\ast})$. From these convergences and Lebesgue dominated convergence theorem we obtain that
	$$\displaystyle \lim_{n\to \infty} (u_{\lambda_{n}},u_{\lambda_{n}}-u)_{H}= \displaystyle \lim_{n\to \infty} \displaystyle \int_{\Omega}a(x)u_{\lambda_{n}}^{-\gamma}(u_{\lambda_{n}}-u)+\displaystyle \lim_{n\to \infty} \lambda_{n} \displaystyle \int_{\Omega}f(u_{\lambda_{n}})(u_{\lambda_{n}}-u)=0,$$
	which implies 
	$$\displaystyle \lim_{n\to \infty} \Vert  u_{\lambda_{n}}-u \Vert^{2}=\displaystyle \lim_{n\to \infty} (u_{\lambda_{n}},u_{\lambda_{n}}-u)_{H}-\displaystyle \lim_{n\to \infty} (u,u_{\lambda_{n}}-u)_{H} =0,$$
	holds, that is $u_{\lambda_{n}}\to u$ in $H_{0}^{1}(\Omega)$.
\end{proof}

Now we will prove $b)$. Let us fix $\lambda \in (0,\lambda_{\ast})$ and we show in what follows that 
$$	\frac{dI_{\lambda}(u_{\lambda})}{d\lambda}=-\displaystyle \int_{\Omega}F(u_{\lambda}).$$

Let $\mu\in (0,\lambda_{\ast})$. From \eqref{129} we have that
$$I_{\lambda}(u_{\lambda})=I_{\mu}(u_{\lambda})+(\mu-\lambda)\displaystyle \int_{\Omega}F(u_{\lambda})\geq I_{\mu}(u_{\mu})+(\mu-\lambda)\displaystyle \int_{\Omega}F(u_{\lambda})$$
and 
$$I_{\mu}(u_{\mu})=I_{\lambda}(u_{\mu})+(\lambda-\mu)\displaystyle \int_{\Omega}F(u_{\mu})\geq I_{\lambda}(u_{\lambda})+(\lambda-\mu)\displaystyle \int_{\Omega}F(u_{\mu}),$$
and as a consequence of these inequalities we obtain
\begin{equation}\label{14}
	-(\mu-\lambda)F(u_{\lambda})\geq I_{\mu}(u_{\mu})-I_{\lambda}(u_{\lambda})\geq -(\mu-\lambda)\displaystyle \int_{\Omega}F(u_{\mu}).
\end{equation}

Thus, \eqref{14} and Lemma \ref{l4} leads us to infer that
$$\displaystyle \liminf_{\mu \downarrow \lambda} \frac{I_{\mu}(u_{\mu})-I_{\lambda}(u_{\lambda})}{\mu-\lambda}=\displaystyle \limsup_{\mu \downarrow \lambda} \frac{I_{\mu}(u_{\mu})-I_{\lambda}(u_{\lambda})}{\mu-\lambda}=-\displaystyle \int_{\Omega}F(u_{\lambda})$$
and 
$$\displaystyle \liminf_{\mu \uparrow \lambda} \frac{I_{\mu}(u_{\mu})-I_{\lambda}(u_{\lambda})}{\mu-\lambda}=\displaystyle \limsup_{\mu \uparrow \lambda} \frac{I_{\mu}(u_{\mu})-I_{\lambda}(u_{\lambda})}{\mu-\lambda}=-\displaystyle \int_{\Omega}F(u_{\lambda}),$$
and therefore 
$$	\frac{dI_{\lambda}(u_{\lambda})}{d\lambda}=\displaystyle \lim_{\mu \to \lambda} \frac{I_{\mu}(u_{\mu})-I_{\lambda}(u_{\lambda})}{\mu-\lambda}$$ 
there exists and
$$	\frac{dI_{\lambda}(u_{\lambda})}{d\lambda}=-\displaystyle \int_{\Omega}F(u_{\lambda}).$$

Finally, we can apply the Lemma \ref{l4} to conclude that
the function $(0,\lambda_{\ast})\ni \lambda \to \int_{\Omega}F(u_{\lambda})$ is continuous and it follows that $I_{\lambda}(u_{\lambda})$ belongs to $C^{1}((0,\lambda_{\ast}),\mathbb{R})$. The proof of $b)$ is now complete.\\
$c)$ We claim that $u_{\lambda}=z+w$, where $z,w \in C^{1,\alpha}(\overline{\Omega})$ for some $\alpha\in (0,1)$. Indeed, by Theorem 3 in \cite{BN} there exist $\epsilon>0$ such that $u_{\lambda}(x)\geq \epsilon d(x)$ in $\Omega$. Hence,
$$0<a(x)u_{\lambda}^{-\gamma}(x)\leq  Cd^{\gamma-\eta}(x)d^{-\gamma}(x) \leq   Cd^{-\eta}(x),$$
for some constant $C>0$. So, from Lemma 2.1 in \cite{HA}  it follows that there exists $0<w\in C^{1,\alpha}(\overline{\Omega})$ (with $\alpha\in (0,1)$) such that
$$\displaystyle \int_{\Omega} \nabla w\nabla \phi=\displaystyle \int_{\Omega} a(x) u_{\lambda}^{-\gamma} \phi ,$$
for every $\phi \in H_{0}^{1}(\Omega)$, and this implies that
$$\displaystyle \int_{\Omega} \nabla u_{\lambda}\nabla \phi=\displaystyle \int_{\Omega} a(x) u_{\lambda}^{-\gamma} \phi+\lambda \int_{\Omega}  f(u_{\lambda}) \phi=\displaystyle \int_{\Omega} \nabla w\nabla \phi+\lambda \int_{\Omega}  f(u_{\lambda}) \phi, $$
that is
$$\displaystyle \int_{\Omega} \nabla (u_{\lambda}-w)\nabla \phi=\lambda \int_{\Omega}  f(u_{\lambda}) \phi= \lambda \int_{\Omega}  f((u_{\lambda}-w)+w) \phi$$
for every $\phi \in H_{0}^{1}(\Omega)$. So, the function $z=u_{\lambda}-w$ satisfies
\begin{equation*}
	\left\{
	\begin{array}{l}
		-\Delta z = g(x,z)~in ~ \Omega,\\
		z(x)=0~~on~~\partial \Omega,
	\end{array}
	\right.
\end{equation*}
where 
$$
g(x,t)=\left\{
\begin{array}{ccc}
\lambda f(t+w(x)), & \mbox{if} & t\geq 0 \\
\lambda f(w(x)), & \mbox{if} & t<0.\\
\end{array}
\right.
$$
Using \eqref{15} and the Young inequality we have
\begin{equation}\label{18}
	\mid g(x,t)\mid \leq c_{1}+c_{2}t\leq  c_{1}+c_{2}t^{p},
\end{equation}
for some constants $c_{1},c_{2}>0$ and $p\in (1,2^{\ast})$. From \eqref{18} we can use a usual bootstrap argument to get $z\in C^{1,\alpha}(\overline{\Omega})$, for some $\alpha\in (0,1)$. Since $u_{\lambda}=z+w$ it follows that $u_{\lambda}\in C^{1,\alpha}(\overline{\Omega})$ for some $\alpha\in (0,1)$.\\
$d)$  We observe that assumption $f(s)=s$ for all $s\in \mathbb{R}^{+}$ implies that $\lambda_{\ast}=\delta_{1}$. Using the Poincar\'e inequality we have
\begin{equation}\label{222}
	I_{\lambda}(u_{\lambda})\geq \frac{1}{2}\left(1-\frac{\lambda}{\delta_{1}}\right)\Vert u_{\lambda}\Vert^{2}-\frac{1}{1-\gamma}\displaystyle \int_{\Omega}a(x)\vert u_{\lambda} \vert^{1-\gamma}>0,
\end{equation} 
for every $\lambda \in \left(0,\lambda_{\ast}\right)$. 

By virtude of \eqref{222} and \eqref{10} it follows that
$$0<	I_{\lambda}(u_{\lambda})\leq J_{\lambda}(t_{\lambda}(\phi_{1})\phi_{1})=\displaystyle -\left(\frac{1+\gamma}{2(1-\gamma)}\right)\left(\displaystyle \int a(x)\vert \phi_{1}\vert^{1-\gamma}\right)\left(\frac{H_{\lambda}(\phi_{1})}{\displaystyle \int a(x)\vert \phi_{1}\vert^{1-\gamma}}\right)^{\frac{\gamma-1}{1+\gamma}}, $$
and since $1<\gamma$ and
$$H_{\lambda}(\phi_{1})=(\lambda_{1}-\lambda )\displaystyle \int \vert \phi_{1}\vert^{2}\to 0$$ 
as $\lambda \to \lambda_{\ast}$, we have that 
$$0\leq \displaystyle \liminf_{\lambda \uparrow \lambda_{\ast}}	I_{\lambda}(u_{\lambda})\leq 	\displaystyle \limsup_{\lambda \uparrow \lambda_{\ast}}	I_{\lambda}(u_{\lambda})\leq 0,$$
which implies that $I_{\lambda}(u_{\lambda})\to 0$ as $\lambda \to \lambda_{\ast}$. The prove of $d)$ is complete.
\section{Proof of Theorem \ref{T2}}
Let us prove the Theorem \ref{T2}. Let $u_{\lambda}$ be as in the Lemma \ref{l1} and let us prove that $u_{\lambda}$ is solution of $(P)_{\lambda}$. To this aim, fisrt we consider $\phi \in H^{1}_{0}(\Omega)$ such that $\phi \geq 0$ in $\Omega$ and $\epsilon>0$. Since $ u_{\lambda}+\epsilon \phi\in D^{+}$, it follow that
$$\frac{1}{2}\Vert u_{\lambda}+\epsilon \phi\Vert^{2}-\frac{1}{2}\Vert u_{\lambda}\Vert^{2}-\lambda \displaystyle \int_{\Omega}F(u_{\lambda}+\epsilon \phi)-\displaystyle F(u_{\lambda})\geq \dfrac{1}{1-\gamma}\displaystyle \int_{\Omega}a(x)\vert u_{\lambda}+\epsilon \phi \vert^{1-\gamma} -\dfrac{1}{1-\gamma}a(x)\vert u_{\lambda} \vert^{1-\gamma}.$$

Thus, dividing the last inequality by $\epsilon$ and passing to the liminf as $\epsilon \to 0$, by Fatou's Lemma we have
$$
	\displaystyle \int_{\Omega} \nabla u_{\lambda} \nabla \phi  - \lambda\displaystyle \int_{\Omega} f(u_{\lambda})\phi \geq \displaystyle \int_{\Omega} u_{\lambda}^{-\gamma}\phi,
$$
for every $\phi \in H^{1}_{0}(\Omega)$ with $\phi \geq 0$. 

Now, since $tu_{\lambda}\in D^{+}$ for every $t>0$, we have that the function $\psi\in C^{1}((0,\infty),\mathbb{R})$ defined by $\psi(t)=I_{\lambda}(tu_{\lambda})$ has a global minimum at $t=1$, and therefore
$$
	0=\psi^{\prime}(1)=\Vert u_{\lambda}\Vert^{2}-\lambda \displaystyle \int_{\Omega}f(u_{\lambda})u_{\lambda}-\displaystyle \int_{\Omega}a(x)\vert u \vert^{1-\gamma}.
$$

Finally, following the proof of Theorem \ref{T1} we can prove that $u_{\lambda}$ is a solution of $(P)_{\lambda}$. Moreover, by Lemma \ref{l3} we have that $u_{\lambda}$ is unique.

Now, let us prove $a), b),c),d)$ and $e)$.\\
$a)$ First, we note that 
$$H_{\lambda}(\phi_{1})=\left(\lambda_{1}-\lambda \theta\right)\displaystyle \int_{\Omega}\phi_{1}^{2},$$
and using \eqref{10} this implies that $J_{\lambda}(t_{\lambda}(\phi_{1})\phi_{1})\to -\infty$. Hence, by \eqref{11} we have
$$-\infty\leq \displaystyle \lim_{\lambda \uparrow \lambda_{\ast}}I_{\lambda}(u_{\lambda})\leq \displaystyle \lim_{\lambda \uparrow \lambda_{\ast}}I_{\lambda}(t_{\lambda}(\phi_{1})\phi_{1}) \leq \displaystyle \lim_{\lambda \uparrow \lambda_{\ast}}J_{\lambda}(t_{\lambda}(\phi_{1})\phi_{1})=-\infty,$$
that is $\displaystyle \lim_{\lambda \uparrow \lambda_{\ast}}I_{\lambda}(u_{\lambda})=-\infty$.

$b)$ It is enough to prove that $\displaystyle \liminf_{\lambda \uparrow \lambda_{\ast}}\Vert u_{\lambda}\Vert=\infty$. Suppose, reasoning by the contradiction, that $\displaystyle \liminf_{\lambda \uparrow \lambda_{\ast}}\Vert u_{\lambda}\Vert<\infty$. As a consequence of this, there exists a
bounded sequence $\left\{u_{\lambda_{n}}\right\}\subset H^{1}_{0}(\Omega)$ with $\lambda_{n}\uparrow \lambda_{\ast}$. Therefore, we may assume that there exists a subsequence, still denoted $\left\{u_{\lambda_{n}}\right\}$, such that $u_{\lambda_{n}}\rightharpoonup u$ in $H^{1}_{0}(\Omega)$ for some $u\in H_{0}^{1}(\Omega)$.  So, $u_{\lambda_{n}}\to u$ in $ L^{s}(\Omega)$ for all $s\in (0,2^{\ast})$ and $u_{\lambda_{n}}\to u\geq 0$ a.e. in $\Omega$. Since $u\longmapsto \Vert u \Vert^{2}$ is  weakly lower semicontinuous, from item $a)$ we obtain that
$$-\infty < I_{\lambda_{\ast}}(u)\leq \displaystyle \liminf_{\lambda_{n} \uparrow \lambda_{\ast}}I_{\lambda_{n}}(u_{\lambda_{n}})=-\infty,$$
which is an absurd. Therefore, $\displaystyle \limsup_{\lambda \uparrow \lambda_{\ast}}\Vert u_{\lambda}\Vert\geq \displaystyle \liminf_{\lambda \uparrow \lambda_{\ast}}\Vert u_{\lambda}\Vert=\infty$, which implies that $\Vert u_{\lambda}\Vert \to \infty$ as $\lambda \uparrow \lambda_{\ast}$. The proof of $b)$ is complete.

$c)$ Let $\lambda<\mu$. By Lemma \ref{l3} we have $u_{\lambda}\leq u_{\mu}$ in $\Omega$. Hence,
$$\Vert u_{\lambda}\Vert^{2}=\displaystyle \int_{\Omega} a(x)\vert u_{\lambda}\vert^{1-\gamma}+\lambda \displaystyle \int_{\Omega} f(u_{\lambda})u_{\lambda}\leq \displaystyle \int_{\Omega} a(x)\vert u_{\mu}\vert^{1-\gamma}+\mu \displaystyle \int_{\Omega} f(u_{\mu})u_{\mu}=\Vert u_{\mu}\Vert^{2},$$
that is $\Vert u_{\lambda}\Vert\leq \Vert u_{\mu}\Vert$.\\

$d)$ The proof of $d)$ follows the same idea of the proof of $b)$ of the Theorem \ref{T1}.

$e)$ As in $c)$ of Theorem \ref{T1} we let us prove that $u_{\lambda}=z+w$, where $z,w \in C^{1,\alpha}(\overline{\Omega})$ for some $\alpha\in (0,1)$. First, we know from Theorem 3 in \cite{BN} that there exists $\epsilon>0$ such that $u_{\lambda}\geq \epsilon d(x)$. Hence, since $a\in L^{\infty}(\Omega)$ we have
$$0<a(x)u_{\lambda}^{-\gamma}(x)\leq  Cd^{-\gamma}(x),$$
for some constant $C>0$. Now the proof follows the same idea of the proof of $c)$ of Theorem \ref{T1}. The proof is complete.

\end{document}